\newtheorem{theo}{Theorem}[section]
\newtheorem{lema}[theo]{Lemma}
\newtheorem{cor}[theo]{Corollary}
\newtheorem{prop}[theo]{Proposition}
\newtheorem{rem}[theo]{Remark} 
\newtheorem{defi}[theo]{Definition}
\newtheorem{eje}[theo]{Example}
\newcommand{\N}{\mathbb{N}}
\newcommand{\Z}{\mathbb{Z}}
\newcommand{\R}{\mathbb{R}}
\newcommand{\C}{\mathbb{C}}
\newcommand{\Q}{\mathbb{Q}}
\newcommand{\Sp}{\mathbb{S}}
\newcommand{\D}{\mathbb{D}}
\newcommand{\Ho}[3]{\mathrm{H}_{#1}(#2,#3)}
\title{New Open-book Decompositions in Singularity Theory}
\author{
Hayd\'ee Aguilar-Cabrera\footnote{Research partially supported by CONACyT grants U55084 and J49048-F (Mexico), by ECOS-ANUIES grant M06-M02 (France-Mexico) and the Laboratorio Internacional Solomon Lefschetz (France-Mexico).}\\[12pt]
Instituto de Matem\'aticas, Unidad Cuernavaca\\
Universidad Nacional Aut\'onoma de M\'exico\\[12pt]
Institut de Math\'ematiques de Luminy\\
Universit\'e de la M\'editerran\'ee
}
\begin{document}
\maketitle

\begin{abstract}
In this article, we study the topology of real analytic germs $F \colon (\C^3,0) \to (\C,0)$ given by $F(x,y,z)=\overline{xy}(x^p+y^q)+z^r$ with $p,q,r \in \N$, $p,q,r \geq 2$ and $(p,q)=1$. Such a germ gives rise to a Milnor fibration $\frac{F}{\mid F \mid} \colon \Sp^5\setminus L_F \to \Sp^1$. We describe the link $L_F$ as a Seifert manifold and we show that in many cases the open-book decomposition of $\Sp^5$ given by the Milnor fibration of $F$ cannot come from the Milnor fibration of a complex singularity in $\C^3$.
\end{abstract}

\section*{Introduction}
The study of the geometry of isolated complex singularities is an area with great advances in the last decades. A main result and starting point for this topic is Milnor's Fibration Theorem (see \cite{milnor:singular}): Let $f\colon (\C^n,0) \to (\C,0)$ be a complex analytic germ, let
\begin{equation*}
L_f \colon = f^{-1}(0) \cap \Sp^{2n-1}_{\varepsilon} \ ,
\end{equation*}
where $\Sp^{2n-1}_{\varepsilon}$ is a $(2n-1)$-sphere centred at the origin, of radius $\varepsilon$ sufficiently small. Then the map
\begin{equation*}
\phi_f = \frac{f}{|f|} \colon \Sp^{2n-1}_{\varepsilon} \setminus L_f \to \Sp^1
\end{equation*}
is the projection of a $C^{\infty}$ locally trivial fibration.

In fact, if the origin is an isolated singularity, $\phi_f$ induces an open-book decomposition of the sphere $\Sp^{2n-1}_{\varepsilon}$ with binding $L_f$ and whose pages are the fibres of $\phi_f$.

In \cite{milnor:singular}, J.~Milnor shows that some real analytic germs also give rise to fibrations: Let $f \colon (U \subset \R^{n+k},0) \to (\R^k,0)$ be real analytic and a submersion on a punctured neighbourhood $U\setminus\{0\}$ of the origin in $\R^{n+k}$. Let $L_f$ (defined as above) be the link of the singularity and let $N(L_f)$ be a small tubular neighbourhood of the link in the sphere $\Sp^{n+k-1}_{\varepsilon}$. Then there exists a $C^{\infty}$ locally trivial fibration 
\begin{equation*}
\varphi  \colon \Sp^{n+k-1}_{\varepsilon} \setminus N(L_f) \to \Sp^{k-1} \ .
\end{equation*}

The geometry of isolated real singularities has been less studied from this point of view essentially for two reasons: 

The first one is that the hypothesis is very stringent; for example, when $k=2$, the set of critical points of $f$ is, in general, a curve. Therefore it is not easy to find examples of real analytic germs satisfying such condition. The second reason is that even when a function $f$ satisfies this condition, in general it is not true that the projection $\varphi$ can be given by $\frac{f}{\mid f\mid}$ as in the complex case.

Despite these obstacles there is now an important collection of works giving examples of such real germs and studying the geometry of the fibration $\varphi$. See for example \cite{loo:k2}, \cite{MR0365592}, \cite{MR1900787}, \cite{MR2115674}, \cite{Cis09}, \cite{Oka08}, etc.

One of the main challenges is to find examples sufficiently controlled to give rise to a beautiful geometry; \textit{i.e.}, open-book decompositions; but at the same time far from the complex context in order to get geometrical objects which cannot be obtained from complex Milnor fibrations.

For example, consider the family of real analytic germs $f \colon (\C^2 \cong \R^4, 0) \to (\C \cong \R^2, 0)$ defined by $f(x,y)=x^p \bar{y} + \bar{x} y^q$ with $p,q \geq 2$. By \cite{MR1679312}, $f$ has a Milnor fibration with projection $\phi_f = \frac{f}{|f|}$. In \cite{PASJ03} A.~Pichon and J.~Seade prove that the link $L_f$ is isotopic to the link of the holomorphic germ $g(x,y)=xy(x^{p+1}+y^{q+1})$, but the open-book decomposition given by the Milnor fibration of $f$ is not equivalent to the one given by the Milnor fibration of $g$.

In this paper we study the family of real germs $F \colon (\C^3,0) \to (\C,0)$ given by $F(x,y,z)=\overline{xy}(x^p+y^q)+z^r$ with $p,q,r \in \N$, $p,q,r \geq 2$ and $\gcd(p,q)=1$.

We show that $F$ has an isolated singularity at the origin and the projection of the Milnor fibration is given by $\phi_F = \frac{F}{\mid F \mid}$, therefore it gives rise to an open-book decomposition of $\Sp^5$.

We explicitly describe $L_F$ as a Seifert manifold and show that $L_F$ is homeomorphic to the link of a normal complex surface. Finally, our main results exhibit two families of examples among these germs $F$ whose open-book decompositions of $\Sp^5$ cannot appear as Milnor fibrations of holomorphic germs from $\C^3$ to $\C$. We show these open-book decompositions are new due to the following two reasons:

\begin{quotation}
\textbf{Different Binding:} Given the real analytic germ $F \colon (\C^3,0) \to (\C,0)$ defined by
\begin{equation*}
F(x,y,z)= \overline{xy}(x^2+y^3) + z^r \ \text{with} \ r>2 \ ,
\end{equation*}
we prove that $L_F$ cannot be realised as the link of a singularity in $\C^3$; \textit{i.e.} there does not exist a complex analytic germ $G \colon (\C^3,0) \to (\C,0)$ with isolated singularity at the origin such that the link $L_G$ is homeomorphic to the link $L_F$, although $L_F$ is the link of a normal complex surface singularity.
\end{quotation}

\begin{quotation}
\textbf{Different Pages:} Given the real analytic germ $F \colon (\C^3,0) \to (\C,0)$ defined by
\begin{equation*}
F(x,y,z)= \overline{xy}(x^2+y^q) + z^2 \ \text{with} \ q>2 \ ,
\end{equation*}
let $(X,p)$ be a normal Gorenstein complex surface singularity which link is homeomorphic to $L_F$. Then we show that the Milnor fibre of $F$ is not diffeomorphic to any smoothing of $(X,p)$.
\end{quotation}
These results follow from our main Theorems \ref{opb1} and \ref{opb2}.

The paper is organised as follows: In Section \ref{realcrit} we prove, by a direct computation and also using a result of A.~Pichon (see \cite[Th.~5.1]{MR2115674}), that the family of real analytic functions $F(x,y,z)=\overline{xy}(x^p+y^q)+z^r$ with $(x,y,z) \in \C^3$, $p,q,r \in \N$, $\gcd(p,q)=1$ and $r>2$, has an isolated critical point at the origin.

In Section \ref{pwhp}, we show that the link $L_F$ is a Seifert manifold by proving that $F$ is a polar weighted homogeneous polynomial (see \cite{Cis09}, \cite{Oka08}); additionally, by a result of \cite{Cis09} we obtain that the projection of the Milnor fibration of $F$ is given by the map $\phi_F = F / \mid F \mid$; \textit{i.e.} the Milnor fibration of $F$ gives an open book decomposition of $\Sp^5$.

In Section \ref{link} we describe the homeomorphism class of $L_F$ by giving its Seifert invariants; for this we use the functorial property of the rational Euler class of a Seifert manifold (see \cite{MR741334}) and the slice representation given by P. Orlik in \cite{OP72}. Also, by results of W. Neumann in \cite{Neu:calcplumb}, we show that $L_F$ can be seen as the result of plumbing according to some graph $\Gamma$.

In Section \ref{newop} we state and prove Theorems \ref{opb1} and \ref{opb2}. For the case of different binding, we use the fact that the plumbing graph $\Gamma$ is dual to the resolution graph of a normal complex surface singularity. We show that its corresponding canonical class is not integral so the graph cannot be the dual graph of a resolution of a hypersurface singularity. For the case of different pages we use the Join Theorem of \cite{Cis09} to compute the Euler characteristic of the Milnor fibre and we apply the Laufer-Steenbrink formula to see that the Milnor fibre of $F$ cannot appear as a smoothing of a normal Gorenstein complex surface singularity.

\section*{Acknowledgements}
I am most grateful to Anne Pichon and Jos\'e Seade for their supervision and comments on this work. I also want to thank Jos\'e Luis Cisneros-Molina for helpful conversations and remarks.

\section{Isolated critical point}\label{realcrit}
In this section we show that our family of real germs have isolated critical point at the origin; in order to prove this we use the following result.

\begin{prop}\label{singaisl}
Let $h \colon (\R^{n},0) \to (\R^2,0)$ be a real analytic germ and let $r \in \Z^+$. The analytic germ  $H \colon (\R^{n} \times \C,0) \cong (\R^{n+2},0) \to (\R^2,0)$ defined for $(x_1, \ldots, x_n) \in \R^n$ and $z \in \C$ by
\begin{equation*}
H(x_1, \ldots, x_n, z)=h(x_1, \ldots, x_n) +z^r
\end{equation*}
has an isolated singularity at the origin if and only if $h$ has an isolated singularity at the origin.
\end{prop}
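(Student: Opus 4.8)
The plan is to compare the singular loci of $h$ and $H$ directly through their Jacobian matrices. Recall that, here, $h$ (resp.\ $H$) having an isolated singularity at the origin means that it is a submersion on some punctured neighbourhood of $0$, i.e.\ its differential has maximal rank $2$ at every nearby point other than $0$; write $\mathrm{crit}(h)$ and $\mathrm{crit}(H)$ for the germs at $0$ of the loci where this fails. Putting $z=u+iv$ and letting $P(u,v)$, $Q(u,v)$ denote the real and imaginary parts of $z^{r}$, we have $H=(h_1+P,\,h_2+Q)$ with $h=(h_1,h_2)$, so the Jacobian $DH$ at a point $(x,z)$ is the $2\times(n+2)$ matrix obtained by appending to the Jacobian $Dh(x)$ the two columns given by the partial derivatives of $(P,Q)$ with respect to $u$ and to $v$.

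The key step is then to understand those two extra columns. Since $z\mapsto z^{r}$ is holomorphic, the Cauchy--Riemann equations give $\partial_u P=\partial_v Q$ and $\partial_v P=-\partial_u Q$, whence
\begin{equation*}
\det\begin{pmatrix}\partial_u P & \partial_v P\\ \partial_u Q & \partial_v Q\end{pmatrix}=(\partial_u P)^2+(\partial_v P)^2=\Bigl|\tfrac{d}{dz}z^{r}\Bigr|^{2}=r^{2}\,|z|^{2(r-1)}.
\end{equation*}
For $r\ge 2$ this vanishes exactly on $\{z=0\}$, and at such points all four derivatives $\partial_u P,\partial_v P,\partial_u Q,\partial_v Q$ vanish. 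Hence: at any point with $z\ne 0$ the $2\times2$ block formed by the last two columns of $DH$ is invertible, so $DH$ has rank $2$ there irrespective of $h$; and at a point $(x,0)$ these two columns are zero, so $\operatorname{rank}DH(x,0)=\operatorname{rank}Dh(x)$. Equivalently, $\mathrm{crit}(H)=\mathrm{crit}(h)\times\{0\}$ as germs at the origin.

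Both implications are now immediate. If $h$ has an isolated singularity at $0$, then $\mathrm{crit}(h)=\{0\}$, so $\mathrm{crit}(H)=\{0\}$ and $H$ has an isolated singularity at $0$. Conversely, if $\mathrm{crit}(H)=\{0\}$, then intersecting a neighbourhood witnessing this with $\R^{n}\times\{0\}$ forces $\mathrm{crit}(h)\times\{0\}=\{0\}$, i.e.\ $\mathrm{crit}(h)=\{0\}$; concretely, for $x\ne 0$ small the point $(x,0)$ is a regular point of $H$, and there $DH$ has the same rank as $Dh(x)$, so $Dh(x)$ has rank $2$.

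I do not expect a genuine obstacle: the argument is essentially a single rank computation once the block structure of $DH$ is displayed. The only delicate point is the behaviour of the $z$-block, namely that the real Jacobian determinant of $z\mapsto z^{r}$ equals $r^{2}|z|^{2(r-1)}$, so that for $r\ge 2$ it is invertible away from $z=0$ and vanishes identically on $\{z=0\}$ (this is precisely where $r\ge 2$ is used; for $r=1$ the map $H$ is a submersion everywhere). One should also keep in mind that ``isolated singularity at the origin'' is being used in the sense of ``submersion on a punctured neighbourhood'', so the object of interest is the germ of the critical set and whether it is contained in $\{0\}$.
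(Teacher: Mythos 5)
Your proof is correct and follows essentially the same route as the paper: both display the Jacobian of $H$ in block form, observe that the $2\times 2$ block coming from $z^{r}$ is invertible exactly when $z\neq 0$ (and vanishes on $\{z=0\}$ for $r\geq 2$), and read off both implications from that; you work in real coordinates $(u,v)$ via Cauchy--Riemann, while the paper uses the $(z,\bar z)$ columns, which is only a cosmetic difference. Your explicit remark that the ``only if'' direction genuinely needs $r\geq 2$ (for $r=1$ the map $H$ is a submersion everywhere) is a small point of care that the paper's statement, which allows $r\in\Z^{+}$, glosses over.
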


\begin{proof}
The jacobian matrix $M$ of $h(x_1, \ldots, x_n)+z^r$ with respect to the coordinates $x_1, \ldots, x_n, z, \bar{z}$ is given by
\begin{equation*}
\begin{pmatrix}
\begin{matrix}
Dh(x_1, \ldots, x_n)
\end{matrix}
\left|
\begin{matrix}
\frac{1}{2}rz^{r-1}  & \frac{1}{2}r\bar{z}^{r-1} \\[7pt]
\frac{1}{2i}rz^{r-1} & -\frac{1}{2i} r\bar{z}^{r-1}\\
\end{matrix}
\right.
\end{pmatrix}
\end{equation*}
where $Dh(x_1, \ldots, x_n)$ is the jacobian matrix of $h$ with respect to the coordinates $x_1, \ldots, x_n$.

Let $\mathcal{P} \colon \R^n \times \C \to \R^n$ be the projection defined by $\mathcal{P}(x_1, \ldots, x_n, z)=(x_1, \ldots, x_n)$.

If $h$ has an isolated singularity at the origin, then $Dh(x_1, \ldots, x_n)$ has rank $2$ in a neighbourhood $W$ of the origin except the origin. Let $(x_1, \ldots, x_n, z) \in \mathcal{P}^{-1}(W)$. If $(x_1, \ldots, x_n) \neq 0$, then $Dh(x_1, \ldots, x_n)$ has rank two. Otherwise $z \neq 0$ and the matrix
\begin{equation*}
\begin{pmatrix}
\frac{1}{2}rz^{r-1}  & \frac{1}{2}r\bar{z}^{r-1} \\[7pt]
\frac{1}{2i}rz^{r-1} & \frac{-1}{2i} r\bar{z}_{n+1}^{r-1}\\
\end{pmatrix}
\end{equation*}
has rank two. Then $h(x_1, \ldots, x_n)+z^r$ has rank two at each point of $\mathcal{P}^{-1}(W)\setminus \{0\}$.

If $h+z^r$ has isolated singularity at the origin, then $M$ has rank $2$ in a neighbourhood $U$ of the origin except at the origin itself; in particular $M$ has rank $2$ at the points $(x_1, \ldots, x_n,0) \in U \setminus \{0\}$; then the matrix $Dh(x_1, \ldots, x_n, z)$ has rank $2$ in the neighbourhood $\mathcal{P}(U)$ of the origin except at the origin.
\end{proof}

\begin{cor}\label{fpa}
Let $F \colon (\C^3,0) \cong (\R^6,0) \to (\C,0) \cong (\R^2,0)$ be the real analytic function defined by
\begin{equation*}
F(x,y,z)=\overline{xy}(x^p+y^q)+z^r \ ,
\end{equation*}
with $p,q,r \in \N$ and $p,q,r \geq 2$. The function $F$ has isolated singularity at the origin if and only if $(p,q) \neq (2,2)$
\end{cor}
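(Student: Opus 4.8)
The plan is to reduce Corollary \ref{fpa} to Proposition \ref{singaisl} by taking $h(x,y) = \overline{xy}(x^p+y^q)$, viewed as a real analytic germ $(\C^2 \cong \R^4, 0) \to (\C \cong \R^2, 0)$, and $n = 4$, $k = 2$. Since $F = h + z^r$, Proposition \ref{singaisl} tells us that $F$ has an isolated singularity at the origin if and only if $h$ does. So the entire content of the corollary is the following claim: the germ $h(x,y) = \overline{xy}(x^p+y^q)$ has an isolated critical point at the origin if and only if $(p,q) \neq (2,2)$. This is the step I expect to be the main obstacle, and it is where the hypotheses $p,q \geq 2$ and (elsewhere) $\gcd(p,q)=1$ will actually be used.

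To analyse the critical locus of $h$, I would compute the Jacobian of $h$ with respect to the real coordinates, or equivalently work with the Wirtinger derivatives $\partial_x, \partial_{\bar x}, \partial_y, \partial_{\bar y}$. Writing $h = \overline{xy}(x^p + y^q) = \bar x \bar y x^p + \bar x \bar y y^q$, one sees $h$ is holomorphic in neither variable but is a sum of monomials of the form $\bar x \bar y x^p$ and $\bar x \bar y y^q$; the relevant derivatives are $\partial_x h = p\,\bar x\bar y x^{p-1}$, $\partial_y h = q\,\bar x\bar y y^{q-1}$, $\partial_{\bar x} h = \bar y(x^p+y^q)$, $\partial_{\bar y} h = \bar x(x^p+y^q)$. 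The point $(x,y)$ is a critical point of $h$ (as a map to $\R^2 \cong \C$) precisely when the real differential has rank $\leq 1$, which by a standard computation happens exactly when the $2\times 2$ complex gradient vectors $(\partial_x h, \partial_y h)$ and $(\partial_{\bar x} h, \partial_{\bar y} h)$ are complex-linearly dependent — more precisely, when $(\partial_x h)\overline{(\partial_{\bar x} h)} + (\partial_y h)\overline{(\partial_{\bar y} h)}$... I would instead use the cleaner criterion that $(x,y)$ is a critical point iff there exists $\lambda \in \C$ with $\overline{\partial_{\bar x} h} = \lambda\, \partial_x h$ and $\overline{\partial_{\bar y} h} = \lambda\, \partial_y h$ (this is the condition that the image of the differential, spanned by the two vectors $\nabla h$ and $\overline{\nabla h}$ in $\C^2$, is not all of $\C^2$ over $\R$), together with the degenerate cases where one of the sides vanishes.

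Carrying this out: away from the coordinate axes ($x \neq 0 \neq y$), the equations become, after dividing, conditions of the shape $y(x^p+y^q) = \lambda p \bar x \bar y x^{p-1}$ and $x(x^p+y^q) = \lambda q \bar x\bar y y^{q-1}$ (with conjugates taken appropriately); eliminating $\lambda$ gives a relation forcing either $x^p + y^q = 0$ or a monomial identity $q\, y^q x^{p} \cdot (\text{stuff}) = p\, x^p y^q \cdot (\text{stuff})$, which near the origin has no solutions with $x,y$ both small and nonzero once $p \neq q$, and the cases with $x^p + y^q = 0$ then force $\bar x \bar y x^{p-1} = \bar x\bar y y^{q-1} = 0$, impossible for $x,y \neq 0$. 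The cases on the axes ($x = 0$ or $y = 0$) are handled directly: if $y = 0$, $h = 0$ identically on that axis and one checks the differential is surjective off the origin using the $\partial_{\bar x}$ and $\partial_{\bar y}$ terms as long as $x \neq 0$; similarly for $x = 0$. When $(p,q) = (2,2)$ one finds a genuine one-dimensional critical locus (coming from the extra symmetry $p = q$ allowing the elimination to collapse), which exhibits the failure. I would also remark that this is exactly Theorem 5.1 of Pichon \cite{MR2115674} applied to the holomorphic germ $xy(x^p+y^q)$ — the paper says as much — so an alternative and shorter route is simply to invoke that result to conclude $h$ has an isolated singularity when $(p,q) \neq (2,2)$, and to give the explicit critical curve when $(p,q)=(2,2)$; I would present the direct computation as the self-contained argument and cite Pichon for the conceptual reason. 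The only real subtlety is bookkeeping the linear-algebra criterion for when a map $\R^4 \to \R^2$ fails to be a submersion in terms of Wirtinger derivatives, and being careful that "rank $\leq 1$" is the right failure condition here.
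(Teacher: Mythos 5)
Your overall strategy coincides with the paper's: reduce via Proposition \ref{singaisl} to the claim that $h(x,y)=\overline{xy}(x^p+y^q)$ has an isolated critical point exactly when $(p,q)\neq(2,2)$, settle that claim by a direct computation with the Jacobian (your Wirtinger formulation is just a repackaging of the paper's real-and-imaginary-parts computation), and keep Pichon's theorem in reserve as an alternative. The reduction and the treatment of the coordinate axes are fine. The gap is in the central computation, and it is not mere bookkeeping.

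First, your criterion for a critical point is wrong as stated: a point is critical for the mixed function $h$ if and only if $\overline{\partial_{x}h}=\alpha\,\partial_{\bar x}h$ and $\overline{\partial_{y}h}=\alpha\,\partial_{\bar y}h$ for some $\alpha$ with $|\alpha|=1$. With an arbitrary $\lambda\in\C$ the condition is strictly weaker than ``rank $\le 1$'' (for instance $x+2\bar x$ satisfies it everywhere yet is a submersion), and the unit-modulus constraint is precisely what yields the modulus equations $|\partial_xh|=|\partial_{\bar x}h|$ and $|\partial_yh|=|\partial_{\bar y}h|$ --- the paper's equations \eqref{uno} and \eqref{dos} --- which carry the whole argument. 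Second, your elimination lands on the wrong dichotomy. Carried out correctly for $x\neq 0\neq y$, the proportionality equations give $p\,x^p=q\,y^q$ (the $\alpha$ cancels), the modulus equations give $p|x|^p=q|y|^q=|x^p+y^q|$, and combining these with the triangle inequality forces $pq\le p+q$, hence $(p,q)=(2,2)$ when $p,q\ge 2$; conversely for $p=q=2$ the line $y=x$ really is a critical curve. Your version --- ``no solutions once $p\neq q$'', failure attributed to ``the extra symmetry $p=q$'' --- never produces the actual threshold $pq=p+q$ (equivalently $1/p+1/q=1$) that the statement asserts: it says nothing about $p=q\ge 3$, where the singularity \emph{is} isolated but your heuristic predicts otherwise (note the corollary as stated does not assume $\gcd(p,q)=1$, so these cases are in scope), and the monomial identity $p\,x^p=q\,y^q$ by itself has plenty of small nonzero solutions for every $p,q$, so it cannot be what excludes critical points when $p\neq q$. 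The appeal to \cite[Th.~5.1]{MR2115674} would close the gap, but only after one actually computes the multiplicities of $xy$ and $x^p+y^q$ along the rupture component of the resolution ($pq$ and $p+q$ respectively) and observes that they coincide exactly when $(p,q)=(2,2)$; as written you invoke the theorem without extracting that condition.
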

\begin{proof}
Set $f(x,y)=xy$ and $g(x,y) = x^p + y^q$. According to Proposition \ref{singaisl} it is equivalent to proving that $h(x,y) =f(x,y) \overline{g(x,y)}$ has an isolated singularity at $0$ if and only if $(p,q)=(2,2)$. Let us give two proofs of this fact.

First we decompose $f$ in its real and imaginary parts and the corresponding jacobian matrix is:
\begin{align*}
 \begin{pmatrix}
                 \frac{p\overline{xy}x^{p-1}+y(\bar{x}^p+\bar{y}^q)}{2} & \frac{\bar{y}(x^p+y^q)+pxy\bar{x}^{p-1}}{2} & \frac{q\overline{xy}y^{q-1}+x(\bar{x}^p+\bar{y}^q)}{2} & \frac{\bar{x}(x^p+y^q)+qxy\bar{y}^{q-1}}{2}\\
                 \frac{p\overline{xy}x^{p-1}-y(\bar{x}^p+\bar{y}^q)}{2i} & \frac{\bar{y}(x^p+y^q)-pxy\bar{x}^{p-1}}{2i} & \frac{q\overline{xy}y^{q-1}-x(\bar{x}^p+\bar{y}^q)}{2i} & \frac{\bar{x}(x^p+y^q)-qxy\bar{y}^{q-1}}{2i}
                 \end{pmatrix} \ . \\
\end{align*}
Then this matrix has rank less than two in a point $(x,y)$ if and only if the following equations, that are the result of calculating the minors of order 2, are satisfied:
\begin{align}
p^2 |xy|^2 |x|^{2(p-1)} & = |y|^2 |x^p+y^q|^2 \label{uno} \\
q^2 |xy|^2 |y|^{2(q-1)} &=  |x|^2 |x^p+y^q|^2 \label{dos} \\
x\bar{y}|x^p+y^q|^2 &= pq |xy|^2\bar{x}^{p-1}y^{q-1} \label{tres} \ .
\end{align}
From these equations we get that the origin $(0,0)$ is always a critical point, and we also get that if $x=0$ then $y=0$ and vice versa.

Therefore, in order to look for another critical points we can suppose $x \ne 0 \ne y$. Simplifying equations \eqref{uno} and \eqref{dos}, we get:
\begin{equation*}
p^2|x|^{2p} = q^2|y|^{2q} = |x^p+y^q|^2 \ .
\end{equation*}
A direct computation shows that this equation together with \eqref{tres} have non trivial solutions if and only if $p = q = 2$.

An alternative proof consists of using (\cite{MR2115674}, Th~5.1), which states that $f\bar{g}$ has an isolated singularity at $0$ if and only if the link $L_f-L_g$ is fibred. Let $\pi : X \rightarrow \C^2$ be the minimal resolution of the germ $fg$. Then (\cite{MR2115674}, 2.1) states that $L_f-L_g$ is fibred if and only if   $m_f - m_g =0$, where $m_f$ and $m_g$ denote the multiplicities of $f \circ \pi$ and $g \circ \pi$ along the (unique in our case) rupture component of $\pi^{-1}(0)$. As $m_f = pq$ and $m_g = p+q$, the corollary is proved.
\end{proof}

\begin{rem}
\begin{itemize}
\item Notice that the arguments we used above to show that $f$ has an isolated critical point are, in this particular case, equivalent to showing that the system given in \cite[page 8]{MR2115674} has non-trivial solutions if and only if $p=q=2$.
\item We notice as well that Cor~\ref{fpa} is consistent with \cite[Example 1.1.c]{PichSea:barfg} where it is shown that the function
\begin{equation*}
\overline{z_1 z_2 \cdots z_n}(z_1^{a_1} + \cdots + z_n^{a_n}) \ , a_i \geq 2 \ ,
\end{equation*}
has $0$ as an isolated critical value if and only if the sum of the $\frac{1}{a_i}$ is not 1.
\end{itemize}
\end{rem}

\section{Polar weighted homogeneous polynomials}\label{pwhp}
In this section we recall the definition of a polar weighted homogeneous polynomial and some properties that we will use later. These polynomials were introduced by Cisneros-Molina in \cite{Cis09} following ideas from Ruas, Seade and Verjovsky in \cite{MR1900787} and studied by Oka in \cite{Oka08} and \cite{oka-2009}.

Let $(p_1, \ldots, p_n)$ and $(u_1, \ldots, u_n)$ in $({\Z^+})^n$ be such that $\gcd(p_1, \ldots, p_n)=1$ and $\gcd(u_1, \ldots, u_n)=1$, we consider the action of $\R^+ \times \Sp^1$ on $\C^n$ defined by:
\begin{equation*}
(t, \lambda) \cdot (z) = (t^{p_1} \lambda^{u_1}z_1, \ldots, t^{p_n}\lambda^{u_n}z_n) \ ,
\end{equation*}
where $t \in \R^+$ and $\lambda \in \Sp^1$. 

\begin{defi}
Let $f \colon \C^n \to \C$ be a function defined as a polynomial function in the variables $z_i, \bar{z_i}$:
\begin{equation*}
f(z_1, \ldots, z_n) = \sum_{\mu, \nu} c_{\mu, \nu} z^{\mu} \bar{z}^{\nu} \ ,
\end{equation*}
where $\mu= (\mu_1, \ldots, \mu_n)$, $\nu=(\nu_1, \ldots, \nu_n)$, with $\mu_i, \nu_i$ not negative integers and $z^{\mu}=z_1^{\mu_1} \cdots z_n^{\mu_n}$ (same for $\bar{z}$).

The function $f$ is called a \textbf{polar weighted homogeneous polynomial} if there exists $(p_1, \ldots, p_n)$ and $(u_1, \ldots, u_n)$ in $({\Z^+})^n$ and $a, c \in \Z^+$ such that the action defined above satisfies the functional equality:
\begin{equation*}
f((t, \lambda) \cdot (z)) = t^a \lambda^c (f(z)) \ ,
\end{equation*}
where $z=(z_1, \ldots, z_n)$.
\end{defi}

It is known that any polar weighted homogeneous polynomial has an isolated critical value at the origin (see \cite{Cis09} and \cite{Oka08}).

\begin{defi}
A \textbf{Seifert manifold} is a closed connected 3-manifold endowed with a fixed-point free action of $\Sp^1$; \textit{i.e.}, for all $x \in M$, there exist $\lambda \in \Sp^1$ such that
\begin{equation*}
\lambda \cdot x \neq x \ .
\end{equation*}
\end{defi}

Thanks to a theorem of Epstein (see \cite{MR0288785}), this is equivalent to requiring that $M$ admits an oriented foliation by circles such that the leaves coincide with the orbits of the $\Sp^1$-action.

In this paper only oriented $3$-manifolds will arise. Thus from now on all Seifert manifolds are oriented.

\begin{prop}
Let $f \colon \C^n \to \C$ be a polar weighted homogeneous polynomial with isolated critical point at the origin. Then there exist a fixed-point free action of $\Sp^1$ on the link $L_f := f^{-1}(0) \cap \Sp^{2n-1}$ induced by the action of $\R^+ \times \Sp^1$ described above. Hence, if $n=3$, $L_f$ is a Seifert manifold.
\end{prop}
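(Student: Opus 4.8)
The plan is to produce the $\Sp^1$-action on $L_f$ as the restriction of the given $\R^+\times\Sp^1$-action to the subgroup $\{1\}\times\Sp^1$, and then verify the two facts that make $L_f$ a Seifert manifold: that this action is well defined on $L_f$, and that it is fixed-point free; finally, for $n=3$, that $L_f$ is a closed connected smooth $3$-manifold. Note that the $\R^+$-factor does \emph{not} preserve spheres centred at the origin, so only the $\Sp^1$-factor can be used.

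First I would restrict to $t=1$: the subgroup $\{1\}\times\Sp^1$ acts on $\C^n$ by $\lambda\cdot z=(\lambda^{u_1}z_1,\dots,\lambda^{u_n}z_n)$. Since $|\lambda|=1$ we have $|\lambda^{u_i}z_i|=|z_i|$ for each $i$, hence $\|\lambda\cdot z\|=\|z\|$ and the action preserves every sphere centred at the origin, in particular $\Sp^{2n-1}$. On the other hand, setting $t=1$ in the functional equation $f((t,\lambda)\cdot z)=t^a\lambda^c f(z)$ gives $f(\lambda\cdot z)=\lambda^c f(z)$, so $f(z)=0$ implies $f(\lambda\cdot z)=0$; thus the action preserves $f^{-1}(0)$. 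Combining the two, the $\Sp^1$-action restricts to an action on $L_f=f^{-1}(0)\cap\Sp^{2n-1}$.

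Next I would check fixed-point freeness. Let $x=(x_1,\dots,x_n)\in L_f$. Since $x\in\Sp^{2n-1}$ we have $x\neq 0$, so $x_{i_0}\neq 0$ for some index $i_0$. If $\lambda\cdot x=x$, then in particular $\lambda^{u_{i_0}}x_{i_0}=x_{i_0}$, i.e. $\lambda^{u_{i_0}}=1$; there are only finitely many such $\lambda$, so the isotropy group of $x$ is finite, and choosing any $\lambda\in\Sp^1$ that is not a $u_{i_0}$-th root of unity yields $\lambda\cdot x\neq x$. Hence the action has no global fixed point (equivalently, all isotropy groups are finite).

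Finally, for $n=3$: since $f$ is real analytic with an isolated critical point at the origin, the standard argument of Milnor shows that for $\varepsilon$ small enough $L_f$ is a nonempty compact smooth $3$-manifold without boundary, and it is connected by the connectivity properties of links of isolated singularities (for polar weighted homogeneous polynomials this also follows from the fibration theorem of Cisneros-Molina recalled above, since the binding of the associated open book on the simply connected $\Sp^5$ is connected). Together with the fixed-point free $\Sp^1$-action constructed above, this is precisely the definition of a Seifert manifold, and Epstein's theorem then supplies the associated oriented foliation by circles. The purely algebraic part of the argument is immediate; the only point that needs genuine input beyond a one-line check is this last one, namely confirming that $L_f$ is a closed connected $3$-manifold, which is exactly where the isolated-singularity hypothesis and the ambient dimension $n=3$ enter.
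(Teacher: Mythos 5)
Your proposal follows the same overall route as the paper: restrict the $\R^+\times\Sp^1$-action to $\{1\}\times\Sp^1$, observe that this subgroup preserves both $f^{-1}(0)$ (via the functional equation with $t=1$) and every sphere centred at the origin, and then check fixed-point freeness. The one genuine difference is in the fixed-point-freeness step, and there your argument is the sounder one: you use that a point of $L_f$ lies on the sphere, hence has some coordinate $x_{i_0}\neq 0$, so that $\lambda\cdot x=x$ forces $\lambda^{u_{i_0}}=1$ and the isotropy group is finite. The paper instead argues that a fixed point $z$ would satisfy $f(z)=\lambda^c f(z)$ for all $\lambda$, ``which is a contradiction''; but on $L_f$ one has $f(z)=0$ and that identity is vacuously true, so the paper's argument only excludes fixed points off the zero set, and your coordinate argument is the one that actually works on the link. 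One caveat on your final paragraph: the parenthetical justification of connectedness of $L_f$ --- that the binding of an open book on a simply connected manifold must be connected --- is false in general (the Hopf link $\{xy=0\}\cap\Sp^3$ is the binding of an open book of the simply connected $\Sp^3$ and has two components). The paper does not address connectedness at all, and for the germs actually studied there it can be read off from the explicit Seifert description, so this side remark does not damage your argument, but it should not be rested on simple connectivity of the ambient sphere.
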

\begin{proof}
Given a polar weighted homogeneous polynomial $f$, the action $\cdot$ of $\R^+ \times \Sp^1$ on $\C^n$ induces an action of $\Sp^1$ on $\C^n$ by setting
\begin{equation*}
\lambda \star z := (1, \lambda) \cdot z = (\lambda^{u_1}z_1, \ldots, \lambda^{u_n}z_n) \ ,
\end{equation*}
where $\lambda \in \Sp^1$.

Notice that $f^{-1}(0)$ is invariant under the action $\star$ as well as any sphere $\Sp_{\varepsilon}^{2n-1}$. Therefore, the link $L_f$ is invariant under this action. 

Moreover, this action is a fixed-point free and effective action. Indeed, let $z \in \C^n$ and suppose $z$ is a fixed point of the action $\star$; \textit{i.e.}, for all $\lambda \in \Sp^1$ we have 
\begin{equation*}
\lambda \star z = z \ ,
\end{equation*}
then
\begin{equation*}
f(z) = f(\lambda \star z) = \lambda^c f(z) \ \text{with} \ c \in  \Z^+ \ ,
\end{equation*}
which is a contradiction. When $L_f$ is a $3$-manifold, it is by definition, a Seifert manifold.
\end{proof}

\begin{defi}
An \textbf{open-book decomposition} of a smooth $n$-manifold $M$ consists of a codimension $2$ submanifold $N$ called the \textbf{binding}, embedded in $M$ with trivial normal bundle, together with a fibre bundle decomposition of its complement:
\begin{equation*}
\pi \colon M \setminus N \to \Sp^1 \ ,
\end{equation*}
satisfying that on a tubular neighbourhood of N, diffeomorphic to $N \times \D^2$, the restriction of $\pi$ to $N \times (\D^2 \backslash \{0\})$ is the map $(x,y) \mapsto y / ||y||$. The fibres of $\pi$ are called the \textbf{pages} of the open book. The map $\pi$ is called an \textbf{open-book fibration} of $N$.
\end{defi}

\begin{theo}[{\cite[Prop~3.4]{Cis09}}]
Let $f \colon \C^n \to \C$ be a polar weighted homogeneous polynomial, then the map $\phi$ defined by
\begin{equation*}
\phi= \frac{f}{|f|} \colon (\Sp_{\varepsilon}^{2n-1} \backslash L_f) \to \Sp^1
\end{equation*}
is a locally trivial fibration for any $\varepsilon >0$.
\end{theo}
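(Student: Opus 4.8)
The plan is to use the circle action $\star$ defined above to write down explicit local trivialisations of $\phi$; this sidesteps the non-compactness of the total space $\Sp_{\varepsilon}^{2n-1}\setminus L_f$, which would otherwise block a direct application of Ehresmann's theorem.

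The key observation is an equivariance property. Since $f$ is polar weighted homogeneous, $f(\lambda\star z)=f((1,\lambda)\cdot z)=\lambda^{c}f(z)$ for every $\lambda\in\Sp^1$ and $z\in\C^n$, so $|f(\lambda\star z)|=|f(z)|$. Hence $\star$ preserves every sphere $\Sp^{2n-1}_\varepsilon$ (it multiplies the coordinates by unit complex numbers) and preserves $f^{-1}(0)$, and therefore restricts to a smooth $\Sp^1$-action on $E:=\Sp^{2n-1}_\varepsilon\setminus L_f$; on $E$, where $f$ does not vanish,
\[
\phi(\lambda\star z)=\frac{\lambda^{c}f(z)}{|f(z)|}=\lambda^{c}\phi(z),
\]
so $\phi$ intertwines the $\star$-action with the degree-$c$ action $w\mapsto\lambda^{c}w$ of $\Sp^1$ on the circle. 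Differentiating in $\lambda$ at $1$ shows that $d\phi$ sends the vector field generating $\star$ to $c$ times the rotation generator $w\mapsto iw$ on $\Sp^1$; in particular $\phi$ is a submersion, so each fibre $\phi^{-1}(w_0)$ is a smooth codimension-one submanifold of $E$, and $\phi$ is onto (the $\star$-orbit of any value already fills $\Sp^1$ because $c\ge 1$).

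Now I would trivialise $\phi$ over arcs. Fix $w_0\in\Sp^1$, let $U=\{e^{is}w_0:|s|<\pi\}$, and set
\[
\Theta\colon U\times\phi^{-1}(w_0)\longrightarrow\phi^{-1}(U),\qquad \Theta(e^{is}w_0,z)=e^{is/c}\star z .
\]
By the equivariance, $\phi(\Theta(e^{is}w_0,z))=e^{is}\phi(z)=e^{is}w_0$, so $\Theta$ does land in $\phi^{-1}(U)$ and $\phi\circ\Theta$ is the first projection. The map $\Theta$ is smooth, and it is a diffeomorphism: each $z'\in\phi^{-1}(U)$ determines a unique $s'\in(-\pi,\pi)$ with $\phi(z')=e^{is'}w_0$, depending smoothly on $z'$ since $U$ omits the antipode of $w_0$, and then $\Theta^{-1}(z')=(e^{is'}w_0,\,e^{-is'/c}\star z')$, which is smooth and lands in $U\times\phi^{-1}(w_0)$ because $\phi(e^{-is'/c}\star z')=e^{-is'}\phi(z')=w_0$. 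Two such arcs cover $\Sp^1$, so $\phi$ is a locally trivial fibration. Since $\varepsilon$ entered only through the $\star$-invariance of $\Sp^{2n-1}_\varepsilon$ and the definition $L_f=f^{-1}(0)\cap\Sp^{2n-1}_\varepsilon$, the conclusion holds for every $\varepsilon>0$.

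There is no real obstacle here; the one subtlety is precisely the non-compactness of $E$, which forbids obtaining the fibration from properness. What rescues the argument is that the field used to transport the fibres is the fundamental field of a genuine $\Sp^1$-action and hence automatically complete, so the flow $e^{is/c}\star z$ is globally defined; any nontrivial (necessarily finite cyclic) isotropy of $\star$ is harmless, since only the flow is ever used.
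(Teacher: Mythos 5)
Your argument is correct: the equivariance $\phi(\lambda\star z)=\lambda^{c}\phi(z)$ does make $\phi$ a submersion, and the arc-wise trivialisations $\Theta(e^{is}w_0,z)=e^{is/c}\star z$ are genuine diffeomorphisms, so the fibration property follows for every $\varepsilon>0$. The paper itself offers no proof of this statement --- it is quoted verbatim from \cite{Cis09} --- but your argument is essentially the one given there (and in the earlier work of Ruas--Seade--Verjovsky), namely using the polar $\Sp^1$-action to transport fibres; so there is nothing to correct.
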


\begin{cor}
Let $f \colon \C^n \to \C$ be a polar weighted homogeneous polynomial with isolated singular point at the origin. Then the Milnor fibration of $f$ gives an open-book decomposition of $\Sp^{2n-1}$ with the link $L_f$ as binding and the Milnor fibres as pages.
\end{cor}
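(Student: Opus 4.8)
The plan is to deduce the corollary from the preceding Theorem by verifying, for the binding $L_f$ and the map $\phi = f/|f|$, the three requirements in the definition of an open-book decomposition. By that Theorem the map $\phi \colon \Sp^{2n-1}_\varepsilon \setminus L_f \to \Sp^1$ is already a locally trivial fibration, so what remains is: (i) $L_f$ is a closed smooth submanifold of $\Sp^{2n-1}_\varepsilon$ of real codimension $2$; (ii) its normal bundle in the sphere is trivial; and (iii) there is a tubular neighbourhood of $L_f$, diffeomorphic to $L_f \times \D^2$, on which $\phi$ takes the prescribed form $(x,w) \mapsto w/\|w\|$.

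For (i) and (ii) I would appeal to the standard local picture of an isolated singularity from \cite{milnor:singular}. Since $0$ is an isolated critical point of $f$, the set $f^{-1}(0)$ is smooth away from $0$ and, for $\varepsilon$ small, meets $\Sp^{2n-1}_\varepsilon$ transversally (this is the point at which the curve selection lemma is used), so $L_f$ is a closed smooth manifold of real codimension $2$. That same transversality says that $f$, regarded as a map to $\C \cong \R^2$ and restricted to $\Sp^{2n-1}_\varepsilon$, is a submersion at every point of $L_f$; its differential then identifies the rank-two real normal bundle $\nu(L_f)$ with the trivial bundle $L_f \times \C$, which gives (ii).

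Step (iii) is the only one needing real care, and I would handle it as follows. Submersivity is an open condition and $L_f$ is compact, while the sets $N_\delta := \{\, z \in \Sp^{2n-1}_\varepsilon : |f(z)| \le \delta \,\}$ shrink to $L_f$ as $\delta \to 0$; hence for $\delta$ small $f|_{N_\delta}$ is a submersion onto the closed disc $\D^2_\delta$, and a brief check shows $\arg f$ also restricts to a submersion on $\partial N_\delta$ (its differential is independent of that of $|f|$ precisely because $\phi$ is a submersion on $\Sp^{2n-1}_\varepsilon \setminus L_f$), so Ehresmann's theorem for manifolds with boundary applies: $f|_{N_\delta} \colon N_\delta \to \D^2_\delta$ is a locally trivial fibre bundle. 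Its fibre over $0$ is $f^{-1}(0) \cap \Sp^{2n-1}_\varepsilon = L_f$, so every fibre is diffeomorphic to $L_f$, and as $\D^2_\delta$ is contractible the bundle is trivial. Picking a trivialisation $h \colon N_\delta \xrightarrow{\sim} L_f \times \D^2_\delta$ with $\mathrm{pr}_2 \circ h = f|_{N_\delta}$, for $z \in N_\delta \setminus L_f$ with $h(z) = (x,w)$ we get $\phi(z) = f(z)/|f(z)| = w/\|w\|$; after rescaling $\D^2_\delta$ to $\D^2$ this is exactly the local model in the definition. Hence $(L_f,\phi)$ is an open-book decomposition of $\Sp^{2n-1}_\varepsilon$, whose pages — the fibres of $\phi$, which near $L_f$ are the collars $L_f \times (0,\delta]$ — are the Milnor fibres of $f$, with common boundary $L_f$.

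The main obstacle is step (iii): one must manufacture a tube around $L_f$ on which $f$ itself (not merely its argument $\phi$, which the Theorem already controls) becomes the projection to a disc, and this requires controlling the modulus $|f|$ near the link; shrinking the tube until $f|_{\Sp^{2n-1}_\varepsilon}$ is a submersion there and then invoking Ehresmann does the job. An alternative would be to use the $\R^+ \times \Sp^1$-action directly: the flow of the generating vector field of the $\R^+$-factor, projected back onto the sphere, pushes $L_f$ outward and yields the product neighbourhood explicitly, in the spirit of Milnor's construction in the holomorphic case.
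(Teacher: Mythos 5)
Your argument is correct. The paper states this corollary without proof, treating it as an immediate consequence of the preceding theorem together with the standard Milnor-type analysis near the link, and your write-up supplies exactly those expected details (transversality of $f^{-1}(0)$ with the sphere and hence submersivity of $f|_{\Sp^{2n-1}_{\varepsilon}}$ along $L_f$, triviality of the normal bundle, and the Ehresmann argument on the tube $\{|f|\le\delta\}$ identifying $\phi$ with $(x,w)\mapsto w/\|w\|$), so it matches the intended justification.
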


\begin{prop}\label{fF}
Let $p,q,r \in \Z^+$ such that $\gcd(p,q)=1$. Then the polynomials
\begin{enumerate}[i)]
\item $\overline{xy}(x^p+y^q)$ and \label{fchiq}
\item $\overline{xy}(x^p+y^q) +z^r$
\end{enumerate}
are polar weighted homogeneous polynomials.
\end{prop}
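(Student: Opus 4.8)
The plan is to write each polynomial as a sum of monomials in $x,y,z,\bar x,\bar y,\bar z$ and to exhibit, by inspection, radial weights $(p_1,\dots,p_n)$, polar weights $(u_1,\dots,u_n)$ and exponents $a,c\in\Z^+$ for which $f((t,\lambda)\cdot z)=t^a\lambda^c f(z)$; the hypothesis $\gcd(p,q)=1$ will be used only to keep the weight tuples coprime.

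For $h(x,y)=\overline{xy}(x^p+y^q)=x^p\bar x\,\bar y+\bar x\,y^q\bar y$ I would take $(p_1,p_2)=(q,p)$ as radial weights and $(u_1,u_2)=(q,p)$ as polar weights, noting $\gcd(q,p)=1$. Under $x\mapsto t^qx$, $y\mapsto t^py$ the monomial $x^p\bar x\,\bar y$ gains the factor $t^{qp}t^qt^p$ and $\bar x\,y^q\bar y$ gains $t^qt^{pq}t^p$, so both scale by $t^{pq+p+q}$ and $h$ by $t^a$ with $a=pq+p+q$. Under $x\mapsto\lambda^qx$, $y\mapsto\lambda^py$, and using $\overline{\lambda^u z}=\lambda^{-u}\bar z$ since $|\lambda|=1$, the two monomials gain $\lambda^{qp}\lambda^{-q}\lambda^{-p}$ and $\lambda^{-q}\lambda^{pq}\lambda^{-p}$, both equal to $\lambda^{pq-p-q}$; hence $h$ scales by $\lambda^c$ with $c=pq-p-q=(p-1)(q-1)-1$. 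Since $p,q\ge 2$ and $\gcd(p,q)=1$ force $(p,q)\ne(2,2)$, we get $c\ge 1$, so $a,c\in\Z^+$ and $h$ is polar weighted homogeneous.

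For $F(x,y,z)=h(x,y)+z^r$ I would keep the same proportions on $(x,y)$ and tune the weight of $z$ so that $z^r$ becomes homogeneous of the same radial and polar degrees. Writing $d=\gcd(r,\,pq+p+q)$, take radial weights $\bigl(\tfrac rd q,\tfrac rd p,\tfrac{pq+p+q}{d}\bigr)$: the $h$-part then scales by $t^a$ with $a=\tfrac rd(pq+p+q)$ by the computation above, while $z^r\mapsto t^{r(pq+p+q)/d}z^r=t^a z^r$. An identical rescaling with $e=\gcd(r,\,pq-p-q)$ gives polar weights $\bigl(\tfrac re q,\tfrac re p,\tfrac{pq-p-q}{e}\bigr)$ and $c=\tfrac re(pq-p-q)>0$. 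Thus $F((t,\lambda)\cdot z)=t^a\lambda^c F(z)$; and $\gcd(p_1,p_2,p_3)=\gcd(u_1,u_2,u_3)=1$ follows from $\gcd(q,p)=1$ together with the definitions of $d$ and $e$, so $F$ is polar weighted homogeneous as well.

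I do not expect a genuine obstacle here, as the whole argument is an elementary monomial computation. The two points needing care are the sign flip of the exponents on conjugated variables in the polar action — which is why $c$ comes out as the \emph{difference} $pq-p-q$ rather than a sum — and the divisibility bookkeeping that simultaneously matches the $z^r$ exponent and keeps the weight tuples coprime; positivity of $c$ is exactly the place where $(p,q)\ne(2,2)$ is needed.
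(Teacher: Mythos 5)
Your proof is correct and follows essentially the same route as the paper: part (i) is the same monomial computation, and for part (ii) the paper simply invokes Cisneros-Molina's observation that a sum of polar weighted homogeneous polynomials in independent variables is again one, whereas you inline that step by exhibiting the combined weights explicitly --- your polar weights $\bigl(\tfrac{r}{e}q,\tfrac{r}{e}p,\tfrac{pq-p-q}{e}\bigr)$ are exactly the weights the paper uses later for the $\Sp^1$-action on $L_F$. Your bookkeeping on the coprimality normalization and on the positivity of $c$ (which requires $p,q\ge 2$ and $(p,q)\ne(2,2)$, hypotheses the proposition's statement omits but the paper assumes throughout) is, if anything, slightly more careful than the paper's.
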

\begin{proof}
Set $\delta=(r,pq-p-q)$; let us consider the following action of $\R^+ \times \Sp^1$ on $\C^2$:
\begin{equation*}
 (t, \lambda) \cdot (x,y) = (t^q \lambda^{\frac{rq}{\delta}} x, t^p \lambda^{\frac{rp}{\delta}} y) \ .
\end{equation*}
let $(x,y) \in \C^2$, for all $(t, \lambda) \in \R^+ \times \Sp^1$ we have that
\begin{equation*}
f((t, \lambda) \cdot (x,y)) = t^{q+p+pq} \lambda^{\frac{r(-q-p+pq)}{\delta}} f(x,y) \ ,
\end{equation*}
Thence the polynomial in \eqref{fchiq} is polar weighted homogeneous.

Now, by Example 2.6 of \cite{Cis09}, the sum of two polar weighted homogeneous polynomials in independent variables is again a polynomial of this type. Then, using that $z^r$ is a polar weighted homogeneous polynomial, we have that $\overline{xy}(x^p+y^q) +z^r$ is also a polar weighted homogeneous polynomial.
\end{proof}

\section{The link as a Seifert manifold}\label{link}
A Seifert manifold is classified by its \textbf{Seifert invariants}: Let $M$ be an oriented Seifert manifold, \textit{i.e.}, a connected closed oriented $3$-manifold endowed with a fixed-point free action of $S^1$.  Let $\pi \colon M \rightarrow B$ be the projection on the orbit space $B$. The space $B$ is a compact connected orientable surface and we denote its genus by $g$.

Given a Seifert manifold $M$, we will call \textbf{Seifert fibration} to the fibration $\pi \colon M \to B$.

The $\Sp^1$-action on $M$ induces an orientation on each orbit. Together with the orientation of $M$, this induces an orientation of $B$ and of each local section of $\pi$, in such a way that the orientation 
of a local section followed by the orientation of the orbits gives the orientation of $M$. 

Given an $\Sp^1$-action, the kernel can only be either the trivial group, a cyclic group $\Z_\sigma$ with $\sigma \geq 2$ or $\Sp^1$ itself. In our case, the kernel cannot be $\Sp^1$ since the action is fixed-point free. If the kernel is the trivial group, then the action is effective and in this case we say that an orbit is \textbf{exceptional} if its isotropy subgroup is non-trivial, which is a finite cyclic subgroup of order $\sigma \geq 2$.

Now, if the kernel is a cyclic group $\Z_\sigma$ with $\sigma \geq 2$, the induced action of the quotient $\Sp^1 \diagup \Z_\sigma$ (which is homomorphic to $\Sp^1$) is effective, then we define an exceptional orbit in the same way as above. Notice there are a finite number of exceptional orbits. 

For each exceptional orbit $O \in B$, there exists a tubular neighbourhood $T$ which is union of orbits and  an orientation preserving diffeomorphism $\phi$ of $T$ with the mapping torus
\begin{equation*}
 \D^2 \times \Z / (\rho(x),t) \sim (x,t+1)
\end{equation*}
of a rotation $\rho$ of order $\alpha$ on the oriented 2-disc $\D^2 = \{ z \in \C \mid |z| \leq 1\}$, sending orbits to orbits preserving their orientations. A disc $\phi^{-1}(\D^2 \times \{t\})$ is called a slice of $O$. As $\phi^{-1}(\D^2)$ is a local section of $\pi$, the previous choices of orientation induce an orientation of $\D^2$. Then the angle of the rotation $\rho$ is well defined. 

Seifert invariants for an exceptional orbit are defined as follows. Suppose that the rotation angle on the 2-disc is equal to $2 \pi \beta^* / \alpha$. We have $\gcd (\alpha,\beta^*) = 1$. Let $\beta$ be any integer such that $\beta \beta^* \equiv 1 \pmod{\alpha}$. The  pair $(\alpha,\beta)$ is a Seifert invariant of the exceptional orbit. See \cite[pages 135-140]{MR915761} for more details.

The choice of $\beta$ in its residue class $(\bmod \ \alpha)$ is related to the choice of a section near the exceptional orbit. The Seifert invariant $(\alpha,\beta)$ is called \textbf{normalised} if a section is chosen in such a way that $0 < \beta < \alpha$.

Let now $O_1,\ldots,O_s \in B$ be the exceptional orbits and let $D_1 \ldots,D_s$ be disjoint open discs in $B$, such that $D_i$ is neighbourhood of the orbit $O_i$. Let us fix a section $R$ of the fibration over $B \setminus (D_1 \cup \cdots \cup D_s)$.  The choice of $R$ fixes the $\beta_i$ for each $O_i$, and also defines an Euler class $e \in \Z$, which is essentially the obstruction to  extending $R$ over the whole surface $B$. For details see \cite{MR915761}.

The integers $e$ and $\beta_1, \ldots, \beta_s$ depend on the choice of the section $R$, but the rational number $e_0 = e-\sum \beta_i / \alpha_i$, called the \textbf{rational Euler class} of the Seifert fibration, does not. 

The (normalised) Seifert invariants of $M$ consists of the data : 
\begin{equation*}
\Bigl(g; \, e_0; \, (\alpha_1, \beta_1), \ldots, (\alpha_s, \beta_s)\Bigr)
\end{equation*}
where $g \geq 0$ and $0 <\beta_i<\alpha_i$. 

The rational Euler number has the property of functoriality as it is shown by Jankins and Neumann in \cite[Th~3.3]{MR741334} and Neumann and Raymond in \cite[Th~1.2]{NeuRay:plumb}:

Let $\pi_1 \colon M_1 \to B_1$ and $\pi_2 \colon M_2 \to B_2$ be Seifert fibrations. Assume there exists a map $p$ such that the diagram
\begin{equation*}
\xymatrix{
M_1 \ar[d]_{\pi_1} \ar[r]^{p} & M_2 \ar[d]^{\pi_2} \\
B_1 \ar[r]^{\tilde{p}} & B_2 \\
}
\end{equation*}
commutes and $\deg(p)=m$ and $\deg(\tilde{p}\mid_{fibre})=n$. Then 
\begin{equation}\label{funct}
e_0(M_1 \xrightarrow{\pi_1} B_1)= \frac{m}{n} e_0(M_2 \xrightarrow{\pi_2} B_2) \ .
\end{equation}

\begin{theo}\label{thprinc}
Let $(p,q)$ be coprime integers and $r \in \N$ with $p,q,r \geq 2$. Let $F \colon \C^3 \to \C$ be the function defined by $F(x,y,z)= \overline{xy}(x^p+y^q)+z^r$. Set $\delta=\gcd(r,pq-p-q)$. The Seifert invariants of the link $L_F$ are
\begin{equation*}
\Bigl(\frac{\delta-1}{2}; \ - \frac{\delta^2}{pqr}; \ (qr/\delta, \beta_1),\,(pr/\delta, \beta_2),\,(r/\delta, \beta_3)\Bigr) \ ,
\end{equation*}
where
\begin{align*}
\frac{pq-p-q}{\delta} \beta_1 & \equiv -1 \pmod{qr/ \delta} \ ,\\
\frac{pq-p-q}{\delta} \beta_2 & \equiv -1 \pmod{pr/ \delta} \ ,\\
\frac{pq-p-q}{\delta} \beta_3 & \equiv 1 \pmod{r/ \delta} \ .
\end{align*}
\end{theo}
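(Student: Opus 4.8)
The plan is to exhibit the Seifert structure of $L_F$ explicitly from the $\Sp^1$-action $\lambda\star(x,y,z)$ inherited from the polar weighted homogeneous structure of $F$, and to compute all its Seifert invariants by combining three ingredients: (1) Orlik's slice representation to read off the local data $(\alpha_i,\beta_i^*)$ at the exceptional orbits; (2) the identification of the orbit space $B$ together with a count of exceptional orbits and a Riemann--Hurwitz computation for the genus $g$; and (3) the functoriality of the rational Euler class from equation \eqref{funct} to pin down $e_0$, which is the quantity that does not depend on any choice of section.

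First I would write down the action. From Proposition \ref{fF} (and its proof), $F$ is polar weighted homogeneous for the action $(t,\lambda)\cdot(x,y,z)=(t^{q}\lambda^{rq/\delta}x,\ t^{p}\lambda^{rp/\delta}y,\ t^{pq/?}\lambda^{?}z)$; I would first normalise the weights so that $\gcd$ of the $\Sp^1$-weights is $1$, getting $\Sp^1$-weights proportional to $(rq/\delta,\ rp/\delta,\ (pq-p-q)/\delta)$ after dividing by the common factor, and then describe $L_F=F^{-1}(0)\cap\Sp^5$ with the induced fixed-point free $\Sp^1$-action $\lambda\star z$. The exceptional orbits occur on the coordinate subspaces: the orbit through a point with $x\neq 0=y=z$ (if it lies on $L_F$), the one with $y\neq 0=x=z$, and the one with $z\neq 0=x=y$; on $L_F$ the locus $z=0$ forces $\overline{xy}(x^p+y^q)=0$, and the locus $x=y=0$ forces $z^r=0$, so one has to chase which coordinate axes actually meet $L_F$ and which interior strata carry isotropy. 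The isotropy order at the $z$-axis point is $r/\delta$ and at the $x$- and $y$-axis points is $qr/\delta$ and $pr/\delta$ respectively; these give the $\alpha_i$. For each such orbit I would apply Orlik's slice representation (the reference \cite{OP72} cited in the introduction): the rotation number $\beta_i^*$ of the induced rotation on the normal disc is computed from the remaining $\Sp^1$-weights modulo $\alpha_i$, which is exactly where the congruences $\frac{pq-p-q}{\delta}\beta_i\equiv\mp 1$ come from after inverting $\beta_i^*$ to $\beta_i$ with $\beta_i\beta_i^*\equiv 1$.

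Next, for the genus: the orbit space $B$ is the quotient of $L_F$ by $\Sp^1$, which is the same as the quotient of the weighted-homogeneous curve $\{\overline{xy}(x^p+y^q)+z^r=0\}\cap\Sp^5$ — essentially the link of the curve singularity $F=0$ viewed on the projective side. I would realise $B$ as a branched cover of $\Sp^2$ (the quotient of $\Sp^5$ by the whole action restricted appropriately) branched over three points with branching orders $qr/\delta$, $pr/\delta$, $r/\delta$, or equivalently compute $\chi(B)$ directly from $\chi(L_F)$ using that the generic fibre is a circle and then correcting at the exceptional fibres; the Riemann--Hurwitz / Euler-characteristic bookkeeping should yield $2-2g=\ \dots$, giving $g=(\delta-1)/2$ — note this forces $\delta$ odd, which I would verify is automatic here (if $2\mid r$ and $2\mid pq-p-q$ then $p,q$ both odd, but $pq-p-q$ odd — contradiction — so $\delta$ is odd). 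Finally, for $e_0$: I would use \eqref{funct} with the covering $p\colon L_F\to \Sp^3/(\text{diag }\Sp^1)=\Sp^2$ induced by an auxiliary quasi-homogeneous map (or compare $L_F$ with the link of the associated \emph{holomorphic} weighted homogeneous polynomial $xy(x^p+y^q)+z^{r'}$ whose Seifert invariants and rational Euler number are classical à la Orlik--Wagreich/Neumann), track the two degrees $m,n$ in the diagram, and solve for $e_0(L_F)$; the clean formula $-\delta^2/(pqr)$ should pop out. A useful independent check is the consistency relation $e_0=-\big(e-\sum\beta_i/\alpha_i\big)$ together with the requirement that $e\in\Z$, i.e. $-\frac{\delta^2}{pqr}+\frac{\beta_1\delta}{qr}+\frac{\beta_2\delta}{pr}+\frac{\beta_3\delta}{r}\in\Z$, which I would verify using the defining congruences for the $\beta_i$.

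The main obstacle I anticipate is getting all the signs and orientations right: the Seifert invariant formalism is sensitive to the orientation conventions on $B$, on the fibres, and on $L_F$ (with its complex-analytic orientation), and the functoriality formula \eqref{funct} propagates any sign error directly into $e_0$. Concretely, the subtle points are (i) correctly normalising the weight vector $(rq/\delta, rp/\delta, (pq-p-q)/\delta)$ and checking $\gcd=1$ so that the $\Sp^1$-action is effective (or identifying the cyclic kernel $\Z_\sigma$ and passing to the effective quotient), and (ii) correctly orienting the normal slices at the three exceptional orbits so that Orlik's rotation numbers $\beta_i^*$ come out with the signs that produce $\equiv -1$ in the first two congruences and $\equiv +1$ in the third. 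I would resolve both by cross-checking against the known holomorphic Brieskorn-type example $xy(x^p+y^q)+z^{r}$ (a Seifert link whose invariants are in the literature) in the limiting/comparable cases, and by verifying the integrality of $e$ as above; once the conventions are fixed, the computation of each $(\alpha_i,\beta_i)$ and of $g$ is routine.
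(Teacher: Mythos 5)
Your overall architecture (exceptional orbits plus Orlik slices for the $(\alpha_i,\beta_i)$, Riemann--Hurwitz for $g$, functoriality of $e_0$) is the same as the paper's, but the single construction that makes all three computations actually run is missing: the paper introduces the projection $\mathcal{P}\colon L_F\to\Sp^3$, $(x,y,z)\mapsto(x,y)$, a cyclic branched $r$-covering ramified over $L_f=f^{-1}(0)\cap\Sp^3$ which is equivariant for the two $\Sp^1$-actions, and hence descends to a map $\mathcal{R}\colon B\to\Sp^2$ of orbit spaces, shown to be a cyclic branched $\delta$-covering with exactly one preimage over each of the three points of $\pi_{\bullet}(L_f)$. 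Without this, your genus computation does not go through: the covering $B\to\Sp^2$ you describe, ``branched over three points with branching orders $qr/\delta$, $pr/\delta$, $r/\delta$'', conflates the isotropy orders of the Seifert fibration with ramification indices of a map of surfaces, and Riemann--Hurwitz with that data does not produce $g=(\delta-1)/2$; the correct input is the degree-$\delta$ covering $\mathcal{R}$ totally ramified over three points, giving $2g-2=-2\delta+3(\delta-1)$. Your fallback of deducing $\chi(B)$ from $\chi(L_F)$ also fails, since $\chi(L_F)=0$ for every Seifert manifold and carries no information about $g$. Likewise for $e_0$: equation \eqref{funct} requires a commutative square of Seifert fibrations, and neither your map $L_F\to\Sp^2$ nor the vague comparison with a holomorphic $xy(x^p+y^q)+z^{r'}$ (for which you exhibit no actual map) supplies one; the paper applies \eqref{funct} to the square formed by $\mathcal{P}$ and $\mathcal{R}$, with $e_0(\Sp^3\to\Sp^2)=-1/pq$, fibre degree $r/\delta$ and base degree $\delta$, which is exactly where the factor $\delta^2/r$ comes from.

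Two further concrete problems. First, the third exceptional orbit is not ``the $z$-axis point'': on $L_F$ the condition $x=y=0$ forces $z^r=0$, so the $z$-axis misses $L_F$ entirely; the orbit with isotropy $\Z_{r/\delta}$ is $\{(x,y,0)\in L_F:\ x\neq0,\ y\neq0\}$, i.e.\ the torus-knot branch $x^p+y^q=0$ of $L_f$ sitting in $z=0$. Second, for the $\beta_i^*$ you assert the signs rather than derive them; the actual source of the discrepancy ($-1$ for $\mathcal{O}_1,\mathcal{O}_2$ versus $+1$ for $\mathcal{O}_3$) is that near $\mathcal{O}_1$ the slice in $L_F$ is approximated by $\{(\epsilon',-\bar z^{\,r},z)\}$, whose image under $\mathcal{P}$ is an anti-holomorphically parametrised slice of $\mathcal{P}(\mathcal{O}_1)$ in $\Sp^3$, forcing the slice to be oriented by $\bar z$, whereas at $\mathcal{O}_3$ the slice is the holomorphically oriented $z$-plane. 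Cross-checking against a Brieskorn-type example cannot certify this, because the anti-holomorphic parametrisation is precisely the feature with no holomorphic analogue.
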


\begin{cor}
There exists a normal complex surface singularity $(X,p)$ whose link is homeomorphic to the link $L_F$.
\end{cor}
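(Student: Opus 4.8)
The plan is to invoke the classical characterisation of links of normal complex surface singularities due to Grauert, Neumann, and others. A standard fact (see Neumann, \cite{Neu:calcplumb}) is that an oriented Seifert $3$-manifold arises as the link of a normal complex surface singularity if and only if its rational Euler number is negative, i.e. $e_0 < 0$. Thus the entire argument reduces to reading off the sign of $e_0$ from the Seifert invariants just computed in Theorem \ref{thprinc}.

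First I would recall from Theorem \ref{thprinc} that
\begin{equation*}
e_0(L_F) = -\frac{\delta^2}{pqr} - \sum_{i=1}^{3}\frac{\beta_i}{\alpha_i} \ ,
\end{equation*}
where $(\alpha_1,\beta_1)=(qr/\delta,\beta_1)$, $(\alpha_2,\beta_2)=(pr/\delta,\beta_2)$, $(\alpha_3,\beta_3)=(r/\delta,\beta_3)$, and each $\beta_i$ satisfies $0<\beta_i<\alpha_i$ by the normalisation convention. Since $p,q,r\ge 2$ and $\delta \ge 1$, every $\alpha_i$ is a positive integer, and each normalised $\beta_i/\alpha_i$ lies strictly between $0$ and $1$; hence every summand $-\beta_i/\alpha_i$ is negative, and $-\delta^2/(pqr)$ is visibly negative. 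Therefore $e_0(L_F) < 0$.

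Next I would apply the cited plumbing-calculus criterion: a Seifert fibred rational homology sphere (more generally, any Seifert manifold over an orientable base) with $e_0 < 0$ is orientation-preserving homeomorphic to the link of a normal complex surface singularity — concretely, one can realise it by plumbing on a star-shaped graph $\Gamma$ whose central vertex carries a sufficiently negative Euler number, the arms being the continued-fraction expansions of the $\alpha_i/\beta_i$; such a $\Gamma$ is negative definite and is the dual resolution graph of a normal surface singularity. This is exactly the graph $\Gamma$ that Section \ref{link} produces for $L_F$, so the Corollary follows.

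The only genuine point requiring care — and the step I expect to be the main (if modest) obstacle — is bookkeeping of orientations: the criterion $e_0<0$ is for the link with its \emph{natural} orientation as the boundary of the resolution, and one must be sure that the $\Sp^1$-action orientation on $L_F$ induced by the polar weighted homogeneous structure matches the convention under which Theorem \ref{thprinc} records $e_0 = -\delta^2/(pqr) - \sum \beta_i/\alpha_i$. Once the orientation conventions of Section \ref{link} (following Orlik and Neumann) are lined up with those of the plumbing calculus, the negativity computed above gives the result immediately; no smoothness or further algebro-geometric input is needed, since normality imposes no constraint beyond the existence of the negative-definite star-shaped plumbing graph.
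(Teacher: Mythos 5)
Your proposal is correct and follows exactly the paper's own route: read off the rational Euler number from Theorem \ref{thprinc} and apply Neumann's criterion from \cite{Neu:calcplumb} that a Seifert manifold is the link of a normal complex surface singularity if and only if $e_0<0$. One small slip in conventions: in the paper's normalised Seifert data $(g;\,e_0;\,(\alpha_i,\beta_i))$ the entry $-\delta^2/(pqr)$ is already the rational Euler class $e_0=e-\sum\beta_i/\alpha_i$, so your formula subtracts the terms $\beta_i/\alpha_i$ a second time --- this does not affect the sign, hence the conclusion stands unchanged.
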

\begin{proof}
By Theorem \ref{thprinc}, the rational Euler class of $L_F$ (seen as a Seifert manifold) is negative.
By \cite[Th~6]{Neu:calcplumb} we know that a Seifert manifold $M$ has rational Euler class $e_0 < 0$ if and only if it is the link of an isolated normal singularity of a complex surface.

We can conclude the existence of a normal complex surface singularity $(X,p)$ whose link is homeomorphic to $L_F$.
\end{proof}

\begin{proof}[of Theorem \ref{thprinc}]
Let $f \colon (\C^2,0) \to (\C,0)$ be defined by $f(x,y)=\overline{xy}(x^p+y^q)$ with $(p,q)=1$ and $p,q \geq 2$, and let $F=f(x,y)+z^r$ with $r \in \N$ and $r \geq 2$.

Let $\varepsilon$ be such that the sphere $\Sp^5_{\varepsilon}$ is a Milnor ball (see \cite[Th~2.10]{milnor:singular}) for $F$ and let $\varepsilon^{\prime}$ such that for all $(x,y,z) \in F^{-1}(0)$ with $(x,y) \in \D^4_{\varepsilon^{\prime}}$ we have $|f(x,y)|^{1/r} < \varepsilon$.

We consider the polydisc
\begin{equation*}
\D^6 = \{(x,y,z) | (x,y) \in \D^4_{\varepsilon^{\prime}}, |z| \leq \varepsilon \} \ .
\end{equation*}
For technical reasons we replace the sphere $\Sp^5$ by the boundary $\partial \D^6$ of the polydisc $\D^6$.

According to \cite[Th~ 3.5]{durfee:neighalgs}, the link $F^{-1}(0) \cap \Sp^5$ is diffeomorphic to the intersection $F^{-1}(0) \cap \partial \D^6$, then in the sequel we will denote this intersection by $L_F$.

First we see that $L_F$ is a Seifert manifold: Let $\ast$ be the action of $\Sp^1$ on $\C^3$ given by
\begin{equation*}
\lambda \ast (x,y,z) = (\lambda^{\frac{rq}{\delta}} x, \lambda^{\frac{rp}{\delta}} y, \lambda^{\frac{pq-p-q}{\delta}} z) \ .
\end{equation*}
Notice that $\partial \D^6$ and $F^{-1}(0)$ are invariant under the action $\ast$, then so it is the link $L_F$, \textit{i.e.} it is a Seifert manifold. From now on we denote by $\ast$ the restriction of the action to $L_F$.

Let $\bullet$ be the action of $\Sp^1$ on $\C^2$ given by
\begin{equation*}
\lambda \bullet (x,y) = (\lambda^{\frac{rq}{\delta}} x, \lambda^{\frac{rp}{\delta}}y) \ .
\end{equation*}
As $\Sp^3$ is invariant under this action, we denote the restriction of the action to $\Sp^3$ with the same notation. Notice that $L_f$ is invariant by the action $\bullet$ and it consists of three orbits of this action.

Let $\mathcal{P} \colon L_F \to \Sp^3$ be the projection given by
\begin{equation*}
\mathcal{P}(x,y,z)=(x,y) \ .
\end{equation*}
One can see the projection $\mathcal{P}$ is a  cyclic branched  $r$-covering with cover space $L_F$, base space $\Sp^3$ and ramification locus $L_f=f^{-1}(0) \cap \Sp^3$, and it is equivariant with respect to the actions $\ast$ and $\bullet$; i.e.
\begin{equation}\label{equiv}
\mathcal{P}(\lambda \ast (x,y,z)) =  \lambda \bullet \mathcal{P}(x,y) \ .
\end{equation}
A consequence of the equivariance of $\mathcal{P}$ is that the preimage of an orbit of the action $\bullet$ is the disjoint union of orbits of the action $\ast$.

We orient $L_F$ consistently with the orientation of $\Sp^3$ via the projection $\mathcal{P}$. Now, let $B$ the orbit space under the action $\ast$ and let $\pi_{\ast}$, $\pi_{\bullet}$ the projections of the Seifert fibrations $\pi_{\ast} \colon L_F \to B$ and $\pi_{\bullet} \colon \Sp^3 \to \Sp^2$ respectively.

In order to compute the genus of $B$ and the rational Euler class of the Seifert fibration $\pi_{\ast} \colon L_F \to B$ we define the induced map $\mathcal{R} \colon B \to \Sp^2$ in the following way: Let $\mathcal{O} \in L_F$ be an orbit of the action $\ast$, let $b= \pi_{\ast}(\mathcal{O})$ and let $s=\pi_{\bullet}(\mathcal{P}(\mathcal{O}))$, then
\begin{equation*}
\mathcal{R}(b) \colon = s \ .
\end{equation*}

\begin{lema}\label{diagcomm}
The map $\mathcal{R}$ is a cyclic branched $\delta$-covering with ramification locus $\pi_{\bullet}(L_f)$.
\end{lema}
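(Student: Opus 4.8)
The plan is to read off the structure of $\mathcal{R}$ from that of the equivariant branched covering $\mathcal{P}$, by exploiting the commutative square relating $\mathcal{P}$, $\pi_{\ast}$, $\pi_{\bullet}$ and $\mathcal{R}$. Recall $\mathcal{P}\colon L_F\to\Sp^3$ is the cyclic $r$-fold covering branched along $L_f$ cut out by $z^r=-f(x,y)$, with deck group $\Z_r=\langle g\rangle$, $g(x,y,z)=(x,y,\zeta z)$ and $\zeta=e^{2\pi i/r}$; the map $\mathcal{R}$ is well defined because the equivariance $\mathcal{P}(\lambda\ast P)=\lambda\bullet\mathcal{P}(P)$ forces $\mathcal{P}(\mathcal{O})$ to be a single $\bullet$-orbit for each $\ast$-orbit $\mathcal{O}$. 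Since $B$ and $\Sp^2$ are closed orientable surfaces, it will suffice to exhibit $\mathcal{R}$ as the quotient of $B$ by a cyclic group of order $\delta$ whose fixed-point set maps onto $\pi_{\bullet}(L_f)$.

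First I would count the $\ast$-orbits lying over a generic $\bullet$-orbit $\mathcal{O}_{\bullet}\subset\Sp^3\setminus L_f$. Each point of $\mathcal{O}_{\bullet}$ has exactly $r$ preimages in $L_F$ (the $r$ solutions of $z^r=-f(x,y)\neq 0$). On $L_F\setminus\mathcal{P}^{-1}(L_f)$ all three coordinates are nonzero, so the $\ast$-isotropy there is $\{\lambda:\lambda^{rq/\delta}=\lambda^{rp/\delta}=\lambda^{(pq-p-q)/\delta}=1\}$, which is trivial since $\gcd(rq/\delta,rp/\delta,(pq-p-q)/\delta)=\gcd(r/\delta,(pq-p-q)/\delta)=1$ by the definition of $\delta$; on the other hand the generic $\bullet$-isotropy on $\Sp^3$ is $\Z_{r/\delta}$. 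Hence $\mathcal{P}$ carries a generic $\ast$-orbit onto its whole image $\bullet$-orbit by an $(r/\delta)$-fold covering of circles, so the $r$ preimages of a point of $\mathcal{O}_{\bullet}$ split into exactly $r/(r/\delta)=\delta$ distinct $\ast$-orbits. A short computation with roots of unity (using $\gcd(pq-p-q,r)=\delta$) moreover identifies these $\delta$ orbits with the $\delta$ cosets of $\langle g^{\delta}\rangle$ inside a fibre of $\mathcal{P}$. In particular $\mathcal{R}$ has degree $\delta$, and it is an honest unbranched covering over $\Sp^2\setminus\pi_{\bullet}(L_f)$, since $\ast$ is free and $\mathcal{P}$ unbranched there.

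Next I would look at the orbits over $L_f$. If $\mathcal{O}_{\bullet}\subset L_f$ then $f\equiv 0$ on it, so $z\equiv 0$ on $\mathcal{P}^{-1}(\mathcal{O}_{\bullet})$ and the preimage is the single circle $\{(x,y,0):(x,y)\in\mathcal{O}_{\bullet}\}$; its $\ast$-isotropy is $\Z_{r/\delta}$, matching that of $\bullet$, so $\mathcal{P}$ maps this one $\ast$-orbit homeomorphically onto $\mathcal{O}_{\bullet}$. Thus $\mathcal{R}$ is one-to-one over each of the three points of $\pi_{\bullet}(L_f)$, and since it is $\delta$-to-one elsewhere, its ramification locus is exactly $\pi_{\bullet}(L_f)$. (This step also covers the loci $x=0$ and $y=0$, which lie inside $L_f$.)

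Finally, for cyclicity I would push the deck transformation $g$ down to $B$: a direct check gives $g\circ(\lambda\ast-)=(\lambda\ast-)\circ g$, so $g$ descends to $\bar g\colon B\to B$, and $\mathcal{R}\circ\bar g=\mathcal{R}$ because $\pi_{\bullet}\circ\mathcal{P}\circ g=\pi_{\bullet}\circ\mathcal{P}$. Moreover $g^{\delta}(x,y,z)=(x,y,\zeta^{\delta}z)$ agrees on $L_F$ with $\lambda_0\ast(-)$ for a suitable $\lambda_0\in\Sp^1$ with $\lambda_0^{r/\delta}=1$ and $\lambda_0^{(pq-p-q)/\delta}=\zeta^{\delta}$, which is solvable precisely because $\gcd((pq-p-q)/\delta,r/\delta)=1$; hence $\bar g^{\delta}=\mathrm{id}_B$, and since $\bar g$ permutes the $\delta$ generic orbits over a $\bullet$-orbit freely and transitively, $\langle\bar g\rangle\cong\Z_{\delta}$. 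Then $\mathcal{R}$ factors as $B\to B/\langle\bar g\rangle\xrightarrow{\ \sim\ }\Sp^2$, which exhibits $\mathcal{R}$ as a cyclic branched $\delta$-covering with ramification locus $\pi_{\bullet}(L_f)$. I expect the orbit-counting step to be the main obstacle: one must track the isotropy subgroups of the two $\Sp^1$-actions at generic and at special points and translate the preimage of an orbit under an equivariant branched covering into a count of orbits downstairs — the arithmetic is elementary, but the geometric bookkeeping is where care is needed.
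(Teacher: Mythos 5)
Your argument is correct and follows essentially the same route as the paper: both proofs read the structure of $\mathcal{R}$ off the commutative square by counting $\ast$-orbits in the fibres of $\mathcal{P}$, finding $\delta$ orbits over a generic $\bullet$-orbit and a single orbit over each component of $L_f$ (the paper phrases the generic count as ``an easy computation'' with the $r$-th roots, where you use the isotropy subgroups $\Z_{r/\delta}$ versus $\{1\}$ to the same effect). The only addition is your final step identifying the cyclic deck group $\langle\bar g\rangle\cong\Z_{\delta}$ explicitly, which the paper leaves implicit; that is a welcome completion of the word ``cyclic'' in the statement rather than a different method.
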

\begin{proof}
Let $s \in \Sp^2\setminus\pi_{\bullet}(L_f)$ and let $(x,y) \in \pi_{\bullet}^{-1}(s)$. Then $\mathcal{P}^{-1}(x,y)$ consists of the $r$ points
\begin{equation*}
(x_0,y_0, w^{1/r} e^{\frac{2 \pi k i}{r}}) \ , \ k=0, \ldots, r-1 \ ,
\end{equation*}
where $w = f(x_0,y_0)$.
An easy computation shows that these $r$ points are distributed in $\delta$ orbits.

Now, let $s \in L_f$ and let $(x,y) \in \pi_{\bullet}^{-1}(s)$. Then $\mathcal{P}^{-1}(x,y)$ consists of the single point $(x,y,0)$.
\end{proof}

It follows that we have a commutative diagram:
\begin{equation*}
\xymatrix{
L_F \ar[d]_{\pi_{\ast}} \ar[r]^{\mathcal{P}} & \Sp^3 \ar[d]^{\pi_{\bullet}} \\
B \ar[r]^{\mathcal{R}} & \Sp^2 \\
}
\end{equation*}
which allows us to compute the rational Euler class $e_0(L_F \rightarrow B)$ and the genus $g(B)$. To compute the genus $g(B)$, we apply the Riemann-Hurwitz formula to the branched covering $\mathcal{R}$. The ramification locus consists of 3 points. Therefore,
\begin{equation*}
2g(B)-2 = \delta [2g(\Sp^2)-2]+3(\delta-1) \ ,
\end{equation*}
then $g(B)= \frac{\delta-1}{2}$.

Now we compute the rational Euler class $e_0(L_f \to B)$: by Lemma \ref{diagcomm}, the degree of the restriction of $\mathcal{P}$ to a regular fibre is $r / \delta$ and the degree of $\mathcal{R}$ is $\delta$; since $e_0(\Sp^3 \rightarrow \Sp^2)=- 1/ pq$, it follows from \eqref{funct} that
\begin{equation*}
e_0(L_F \rightarrow \partial \D^6) = - \frac{\delta^2}{pqr} \ .
\end{equation*}
Let us now describe the exceptional orbits. Let $(x_0,y_0,z_0) \in L_F$ such that $x_0 \neq 0$, $y_0 \neq 0$ and $z_0 \neq 0$, then the isotropy subgroup of $(x_0,y_0,z_0)$ by the action $\ast$ consists of the $\lambda \in \Sp^1$ such that
\begin{equation*}
\lambda^{rq/\delta} = \lambda^{rp/\delta} = \lambda^{(pq-p-q)/\delta} = 1 \ .
\end{equation*}
As $\gcd(rq/\delta, rp/\delta, (pq-p-q)/\delta)=1$, the orbit of $(x_0,y_0,z_0)$ is not exceptional.

If $y_0=0$, then $z_0=0$ and the isotropy subgroup of the corresponding orbit consists of the $\lambda \in \Sp^1$ such that $\lambda^{qr/\delta}=1$; \textit{i.e.} it is $\Z_{qr/\delta}$.

If $x=0$, then $z_0=0$ and the isotropy subgroup of the corresponding orbit consists of the $\lambda \in \Sp^1$ such that $\lambda^{pr/\delta}=1$; \textit{i.e.} it is $\Z_{pr/\delta}$.

If $x_0 \neq 0$, $y_0 \neq 0$ and $z_0=0$, the corresponding isotropy subgroup is $\Z_{r/\delta}$ since it consists of the $\lambda \in \Sp^1$ such that $\lambda^{r/\delta}=1$.

Then we have three exceptional orbits:
\begin{align*}
\mathcal{O}_1 & =\{(x,y,z) \in L_F | y=0, z=0\} \ , \\ 
\mathcal{O}_2 & =\{(x,y,z) \in L_F | x=0, z=0\} \ , \\ 
\mathcal{O}_3 & =\{(x,y,z) \in L_F | x \neq 0, y \neq 0, z=0 \} \ .
\end{align*}
with $\alpha_1=qr/\delta$, $\alpha_2=pr/\delta$ and $\alpha_3=r/\delta$.

Let us now compute each $\beta_i$, following the method that uses the slice representation given by Orlik in \cite[pages 57 and 58]{OP72}.

In order to obtain $\beta_1$, one has to compute the angle of the rotation performed by the first return of the orbits of the $\Sp^1$-action on a slice $D \in L_F$ of $\mathcal{O}_1$, say at $(\epsilon^{\prime},0,0) $, \textit{i.e.} by the rotation performed on $D$ by the action of  $e^{2i\pi / \alpha_1}$.

Instead of dealing with a slice  in $L_F$, let us consider a small disk $D'$ close to $D$ in the intersection of $F^{-1}(0) \cap \{ x = \epsilon'\}$, \textit{i.e.}
\begin{equation*}
D'=\{  \epsilon'^{p} \bar{y} +  \bar{y} y^q + z^r=0\} \ .
\end{equation*}
Very close from $(\epsilon',0,0) $, $D'$ can be approximated by the disk parametrised by $z$ :
\begin{equation*}
D''  = \{(\epsilon', -\bar{z}^r,z), z \in \C, |z| \ll 1)\} \ ,
\end{equation*}
and the action of $e^{\frac{2i\pi}{\alpha_1}}$ on $D''$ can be approximated by : 
\begin{equation*}
e^{\frac{2 \pi i \delta}{qr}} \cdot (1, - \bar{z}^r, z) = \bigl(1, - \overline{ \bigl( (e^{\frac{2 \pi i \delta}{qr}})^{\frac{pq-p-q}{\delta}} z \bigr)}^r, (e^{\frac{2 \pi i \delta}{qr}})^{\frac{pq-p-q}{\delta}} z \bigr) \ ,
\end{equation*}
where the disk $D''$ is invariant under the action. Therefore, $\displaystyle\beta_1^*$ equals $\displaystyle\frac{pq-p-q}{\delta} \pmod{\frac{qr}{\delta}}$ up to sign. 

In order to get the right sign, the disk $D''$ has to be oriented as a complex slice of $\mathcal{P}(\mathcal{O}_1)$ via $\mathcal{P}$.  But
\begin{equation*}
\mathcal{P}(D'') = \{ (\epsilon', -\bar{z}^r), z \in \C\} \ ,
\end{equation*}
which is a slice of the orbit $ \mathcal{P}(\mathcal{O}_1)$ endowed with the opposite orientation as the  complex one. Therefore, we consider $D''$ oriented by $\bar{z}$ (and not $z$). 

We then obtain $\displaystyle\beta_1^* = - \frac{pq-p-q}{\delta} \pmod{\frac{qr}{\delta}}$, and then $\beta_1$ is defined by
\begin{equation*}
- \frac{pq-p-q}{\delta}  \beta_1 \equiv 1 \pmod{\frac{qr}{\delta}} \ .
\end{equation*}
A similar computation leads to
\begin{equation*}
- \frac{pq-p-q}{\delta}  \beta_2 \equiv 1 \pmod{\frac{pr}{\delta}} \ .
\end{equation*}
And for the third orbit $\mathcal{O}_3$, we consider the $z$-plane as the slice, \textit{i.e.} we will parametrise our slice with the coordinate $z$, then we consider the action of $e^{\frac{2 \pi i \delta}{r}}$ given by
\begin{equation*}
e^{\frac{2 \pi i \delta}{r}} \cdot (x,y,z) = (x,y, e^{\frac{2 \pi i \delta}{r} \cdot {\frac{pq-p-q}{r}}} z) \ ;
\end{equation*}
this action is the standard action of type $[r/ \delta, (pq-p-q)/ \delta]$, then $\beta_3$ is defined by the congruence
\begin{equation*}
\frac{pq-p-q}{\delta} \beta_3 \equiv 1 \pmod{r/ \delta} \ .
\end{equation*}
\end{proof}
Now, as in \cite{NeuRay:plumb}, if $b_1, \ldots, b_k \in \Z$, we use the following notation
\begin{equation*}
[b_1, \ldots, b_k] = b_1 - \cfrac{1}{b_2 - \cfrac{1}{\ddots \cfrac{}{-\cfrac{1}{b_k}}}}
\end{equation*}

Let $\Gamma_F$ be the plumbing graph such that $L_F \cong \partial P(\Gamma_F)$, where $P(\Gamma_F)$ is the four-manifold obtained by plumbing $2$-discs bundles according to $\Gamma_F$.

Then
\begin{equation*}
\xy
(10,0)*{\Gamma_F};
(15,0)*{=};
(20,0)*{\bullet}="E";
(19,3)*{-e};
(35,10)*{\bullet}="E11";
(35,13)*{-e_{1,1}};
"E";"E11" **\dir{-};
(35,0)*{\bullet}="E21";
(35,3)*{-e_{2,1}};
"E";"E21" **\dir{-};
(35,-10)*{\bullet}="E31";
(35,-7)*{-e_{3,1}};
"E";"E31" **\dir{-};
(50,10)*{\bullet}="E12";
(50,13)*{-e_{1,2}};
"E11";"E12" **\dir{-};
(50,0)*{\bullet}="E22";
(50,3)*{-e_{2,2}};
"E21";"E22" **\dir{-};
(50,-10)*{\bullet}="E32";
(50,-7)*{-e_{3,2}};
"E31";"E32" **\dir{-};
(57,10)*{}="E1C";
"E12";"E1C" **\dir{-};
(57,0)*{}="E2C";
"E22";"E2C" **\dir{-};
(57,-10)*{}="E3C";
"E32";"E3C" **\dir{-};
(62,10)*{\cdots}="E1P";
(62,0)*{\cdots}="E2P";
(62,-10)*{\cdots}="E3P";
(67,10)*{}="E1D";
(67,0)*{}="E2D";
(67,-10)*{}="E3D";
(75,10)*{\bullet}="E1SM1";
(75,13)*{-e_{1,s_{1}-1}};
(75,0)*{\bullet}="E2SM1";
(75,3)*{-e_{2,s_{2}-1}};
(75,-10)*{\bullet}="E3SM1";
(75,-7)*{-e_{3,s_{3}-1}};
"E1D";"E1SM1" **\dir{-};
"E2D";"E2SM1" **\dir{-};
"E3D";"E3SM1" **\dir{-};
(90,10)*{\bullet}="E1S";
(90,13)*{-e_{1,s_{1}}};
(90,0)*{\bullet}="E2S";
(90,3)*{-e_{2,s_{2}}};
(90,-10)*{\bullet}="E3S";
(90,-7)*{-e_{3,s_{3}}};
"E1SM1";"E1S" **\dir{-};
"E2SM1";"E2S" **\dir{-};
"E3SM1";"E3S" **\dir{-};
\endxy
\end{equation*}
where
\begin{equation*}
-e= e_0 + \sum_{i=1}^{3} \frac{\beta_i}{\alpha_i}-3 \ , \quad \frac{\alpha_i}{\alpha_i - \beta_i}=[e_{i,1}, \ldots, e_{i,s_i}], \ \text{for} \ i=1,2,3 \ .
\end{equation*}

\begin{eje}
Let $F$ be the real polynomial defined by
\begin{equation*}
F(x,y,z)= \overline{xy}(x^2+y^3) + z^r \ \text{with} \ r>2 \ .
\end{equation*}
By Theorem~\ref{thprinc}, the Seifert invariants of $L_F$ are given by
\begin{equation*}
\Bigl(0; \ -\frac{1}{6r}; \ (3r, 3r-1),\, (2r, 2r-1),\, (r, 1)\Bigr) \ .
\end{equation*}
To compute the weights in the plumbing graph, we have:
\begin{align*}
\frac{3r}{3r-(3r-1)} &=[3r] \ , \\
\frac{2r}{2r-(2r-1)} &=[2r] \ , \\
\frac{r}{r-1} &= [\underbrace{2,2,\ldots,2}_{(r-1)}] \ ;
\end{align*}
i.e. the plumbing graph $\Gamma_F$ is given by
\begin{equation*}
\xy
(20,0)*{\bullet}="E";
(16,0)*{-1};
(20,15)*{\bullet}="T";
(16,15)*{-3r};
"E";"T" **\dir{-};
(35,13)*{\bullet}="E11";
(35,15)*{-2r};
"E";"E11" **\dir{-};
(35,0)*{\bullet}="E21";
(35,3)*{-2};
"E";"E21" **\dir{-};
(50,0)*{\bullet}="E22";
(50,3)*{-2};
"E21";"E22" **\dir{-};
(57,0)*{}="E2C";
"E22";"E2C" **\dir{-};
(62,0)*{\cdots}="E2P";
(67,0)*{}="E2D";
(75,0)*{\bullet}="E2SM1";
(75,3)*{-2};
"E2D";"E2SM1" **\dir{-};
(90,0)*{\bullet}="E2S";
(90,3)*{-2};
"E2SM1";"E2S" **\dir{-};
(62,-5)*{\underbrace{\quad \quad \quad \quad \quad \quad \quad \quad \quad \quad \quad \quad \quad \quad \quad \quad \quad}_{(r-1) \ \text{vertices}}}
\endxy
\end{equation*}
\end{eje}

\begin{eje}
Let $F$ be the polynomial defined by
\begin{equation*}
F(x,y,z)= \overline{xy}(x^2+y^q) + z^2 \ \text{with} \ q>2 \ .
\end{equation*}
We will study two cases.

\noindent \textbf{Case 1.} Let $q=4a+1$ with $a \in \Z^{+}$, then by Theorem~\ref{thprinc}, the Seifert invariants of $L_F$ are given by
\begin{equation*}
\biggl(0; \ -\frac{1}{4(4a+1)}; \ \bigl(2(4a+1), 2a+1 \bigr),\, (4, 1),\, (2, 1)\biggr) \ .
\end{equation*}
Now, the development in continuous fractions for $\alpha_1/(\alpha_1-\beta_1)$ is the following:
\begin{align*}
\frac{8a+2}{6a-1} &= [2,2,4] & \text{when} \ a=1 \ , \\[5pt]
\frac{8a+2}{6a-1} &= [2,2,3,\underbrace{2, \ldots,2}_{(a-2)},3] & \text{when} \ a \geq 2 \ ;
\end{align*}
When $a=1$, the corresponding plumbing graph has the following form:
\begin{equation*}
\xy
(20,0)*{\bullet}="E";
(16,0)*{-2};
(35,25)*{\bullet}="E31";
(35,27)*{-2};
"E";"E31" **\dir{-};
(50,25)*{\bullet}="E32";
(50,27)*{-2};
"E31";"E32" **\dir{-};
(65,25)*{\bullet}="E33";
(65,27)*{-4};
"E32";"E33" **\dir{-};
(35,13)*{\bullet}="E11";
(50,13)*{\bullet}="E12";
(35,15)*{-2};
"E";"E11" **\dir{-};
(35,0)*{\bullet}="E21";
(35,3)*{-2};
"E";"E21" **\dir{-};
(50,15)*{-2};
"E11";"E12" **\dir{-};
(65,13)*{\bullet}="E13";
"E12";"E13" **\dir{-};
(65,15)*{-2};
\endxy
\end{equation*}
and if $a \geq 2$, the corresponding plumbing graph has the following form:
\begin{equation*}
\xy
(20,0)*{\bullet}="E";
(16,0)*{-2};
(35,25)*{\bullet}="E31";
(35,27)*{-2};
"E";"E31" **\dir{-};
(50,25)*{\bullet}="E32";
(50,27)*{-2};
"E31";"E32" **\dir{-};
(65,25)*{\bullet}="E33";
(65,27)*{-3};
"E32";"E33" **\dir{-};
(80,25)*{\bullet}="E34";
(80,27)*{-2};
"E33";"E34" **\dir{-};
(85,25)*{}="E35";
"E34";"E35" **\dir{-};
(90,25)*{\cdots};
(95,25)*{}="E36";
(100,25)*{\bullet}="E37";
(100,27)*{-2};
"E36";"E37" **\dir{-};
(115,25)*{\bullet}="E38";
(115,27)*{-3};
"E37";"E38" **\dir{-};
(35,13)*{\bullet}="E11";
(50,13)*{\bullet}="E12";
(35,15)*{-2};
"E";"E11" **\dir{-};
(35,0)*{\bullet}="E21";
(35,3)*{-2};
"E";"E21" **\dir{-};
(50,15)*{-2};
"E11";"E12" **\dir{-};
(65,13)*{\bullet}="E13";
"E12";"E13" **\dir{-};
(65,15)*{-2};
(90,33)*{\overbrace{\quad \quad \quad \quad \quad \quad \quad}^{(a-2) \ \text{vertices}}}
\endxy
\end{equation*}

\noindent \textbf{Case 2.} Let $q=4a-1$ with $a \in \Z^{+}$; then by Theorem~\ref{thprinc}, the Seifert invariants of $L_F$ are given by
\begin{equation*}
\biggl(0; \ -\frac{1}{4(4a-1)}; \ \bigl(2(4a-1), 6a-1 \bigr),\, (4, 3), \, (2, 1)\biggr) \ .
\end{equation*}
The development in continuous fractions for $\alpha_1/(\alpha_1-\beta_1)$ is the following:
\begin{align*}
\frac{8a-2}{2a-1} &= [6] & \text{when} \ a=1 \ , \\[5pt]
\frac{8a-2}{2a-1} &= [5,3,\underbrace{2, \ldots,2}_{(a-2)},3] & \text{when} \ a \geq 2 \ .
\end{align*}
When $a=1$, the corresponding plumbing graph has the following form:
\begin{equation*}
\xy
(0,0)*{\bullet}="E";
(-4,0)*{-1};
(0,15)*{\bullet}="E1";
(0,18)*{-6};
(12,12)*{\bullet}="E2";
(14,14)*{-4};
(15,0)*{\bullet}="E3";
(19,0)*{-2};
"E";"E1" **\dir{-};
"E";"E2" **\dir{-};
"E";"E3" **\dir{-};
\endxy
\end{equation*}
On the other hand, when $a \geq 2$, the corresponding plumbing graph has the following form:
\begin{equation*}
\xy
(20,0)*{\bullet}="E";
(16,0)*{-2};
(35,25)*{\bullet}="E31";
(35,27)*{-5};
"E";"E31" **\dir{-};
(50,25)*{\bullet}="E32";
(50,27)*{-2};
"E31";"E32" **\dir{-};
(55,25)*{}="E33";
"E32";"E33" **\dir{-};
(60,25)*{\cdots};
(65,25)*{}="E35";
(70,25)*{\bullet}="E37";
(70,27)*{-2};
"E35";"E37" **\dir{-};
(85,25)*{\bullet}="E38";
(85,27)*{-3};
"E37";"E38" **\dir{-};
(35,13)*{\bullet}="E11";
(35,15)*{-4};
"E";"E11" **\dir{-};
(35,0)*{\bullet}="E21";
(35,3)*{-2};
"E";"E21" **\dir{-};
(60,33)*{\overbrace{\quad \quad \quad \quad \quad \quad \quad}^{(a-2) \ \text{vertices}}}
\endxy
\end{equation*}
\end{eje}

\section{New open-book decompositions}\label{newop}
In the previous section we gave, for the singularities we envisage in this article, the plumbing graphs corresponding to the Seifert invariants given by the $S^1$-action; we know that this graph has negative definite intersection matrix.

Let us consider now, for a moment, the general setting of an arbitrary plumbing graph $\Gamma$, where each vertex $e_i$ has been assigned a genus $g_i \geq 0$ and a weight $w_i < 0$, so that the corresponding intersection matrix $A_\Gamma$ is negative definite. 

The graph determines a $4$-dimensional compact manifold $\widetilde V_\Gamma$ with boundary $L_\Gamma$ which can be assumed to be an almost-complex manifold and its interior is a complex manifold.

The manifold $\widetilde V_\Gamma$ contains in its interior the exceptional divisor $E$, given by the plumbing description, and $\widetilde V_\Gamma$ has $E$ as a strong deformation retract. In this setting, the intersection matrix corresponds to the intersection product in $\Ho{2}{\widetilde V_\Gamma}{\Q} \cong \Q^n$, where $n$ is the number of vertices in the graph. The generators are the irreducible components $E_i$ of the divisor $E$; each of these is a compact (non-singular) Riemann surface embedded in $\widetilde V_\Gamma$.

The fact that the matrix $A_\Gamma$ is negative definite implies, by Grauert's contractibility criterion (see for instance \cite{MR749574}), that the divisor $E$ can be blown down to a point, and we get a complex surface $V_\Gamma$ with a normal singularity at $0$, the image of the divisor $E$. Since $ V_\Gamma$ is homeomorphic to the cone over $L_\Gamma$, its topology does not depend on the various choices and it is determined by the topology of $L_\Gamma$. However, in general, its complex structure does depend on the various choices. 

\begin{defi}
A resolution $\pi \colon \widetilde{V} \to V$ is \textbf{good} if each irreducible component $E_i$ of $E$ is non-singular and for all $i,j$ with $i \neq j$, $E_i \cap E_j$ is at most one point, and no three of them intersect. 
\end{defi}

Since any good resolution has associated a plumbing graph, in order to simplify the notation, we omit the subindex $\Gamma$ and we just write $A$, $\widetilde V$, $V$ and $L$. 

The manifold $\widetilde V$ has an associated \textit{canonical class} $K$:
\begin{defi}
Given a normal surface singularity $(V,p)$ and a good resolution $\widetilde{V}$ of $V$, the \textbf{canonical class} $K$ of $\widetilde{V}$ is the unique class in $\Ho{2}{\widetilde{V}}{\Q} \cong \Q^n$ that satisfies the \textit{Adjunction Formula}
\begin{equation*}
2g_i-2=E_i^2 + K \cdot E_i
\end{equation*}
for each irreducible component $E_i$ of the exceptional divisor.
\end{defi}

Note that the canonical class $K$ of a good resolution $\widetilde{V}$ only depends on the topology of the resolution. The existence and uniqueness of this class comes from the fact that the matrix $A$ is non-singular.

Since the canonical class $K$ is by definition a homology class in $\Ho{2}{\widetilde V}{\Q}$, it is a rational linear combination of the generators: 
\begin{equation*}
 K = \sum_{i=1}^n k_i \, E_i \ , \text{with} \ k_i \in \Q \ .
\end{equation*}

\begin{defi}(see for instance \cite{DAH78}, Def~1.2)
A normal surface singularity germ $(V,0)$ is \textbf{Gorenstein} if there is a nowhere-zero holomorphic two-form on the regular points of $V$. In other words, its canonical bundle $\mathcal{K}:= \bigwedge^2 \big(T^*(V\setminus\{0\}) \big)$ is holomorphically trivial in a punctured neighbourhood of $0$.
\end{defi}

For instance, if $V$ can be defined by a holomorphic map-germ $f$ in $\C^3$, then the gradient $\nabla f$ is never vanishing away from $0$ and we can contract the holomorphic 3-form $dz_1 \wedge dz_2 \wedge dz_3$ with respect to $\nabla f$ to get a never vanishing holomorphic 2-form on a neighbourhood of $0$ in $ V$. So every isolated hypersurface singularity is Gorenstein. More generally, A. Durfee in \cite{DAH78} introduced the following concept.

\begin{defi} The singularity germ  $(V,0)$ is \textbf{numerically Gorenstein} if the canonical class $K$ of some good resolution $\widetilde V$ is integral.
\end{defi}

\begin{rem}\label{integral}
It is easy to show (see \cite{DAH78} or \cite{MR1359516}) that the condition of $K$ being an integral class  is satisfied if and only if the canonical bundle $\mathcal{K}$ is topologically trivial. Therefore  numerically-Gorenstein is a condition independent of the choice of resolution (whose dual graph is $\Gamma$)  on the germ $(V,0)$ and every Gorenstein germ is numerically Gorenstein. 
\end{rem}

Notice that the canonical class $K$ is determined by the intersection matrix $A$, and therefore it is independent of the complex structure we put on $\widetilde V$. However by definition the class $K$ is associated to the graph that defines the surface $V$, and so does its self-intersection number $K^2$, which is defined in the obvious way. There are many different graphs producing the same singularity germ, and each has a different canonical class, with a different self-intersection number.
 
Similarly, one has another number, an integer, associated to $\widetilde V$: its \textit{Euler-Poincar\'e characteristic} $\chi(\widetilde V)$. Of course this number also depends on the graph $\Gamma$ and not only on the topology of the link $L$ of $ V$. Yet one has the following result, which extends \cite[Remark 7.6.i, p. 125]{Sea}, where this is discussed for Gorenstein singularities.
  
\begin{prop}\label{independent}
Assume the germ $(V,0)$ is numerically Gorenstein and let $\widetilde V$ be a good resolution. Let $L$ be the link of $(V,0)$. Then the integer
\begin{equation*}
\chi(\widetilde V) + K^2
\end{equation*}
is independent of the choice of resolution of $(V,0)$. Moreover, if the link $L_*$ of another isolated surface singularity germ is orientation preserving homeomorphic to $L$, then one has:
\begin{equation*}
\chi(\widetilde V) + K^2 = \chi(\widetilde V_*) + K^2_* \ .
\end{equation*}
 \end{prop}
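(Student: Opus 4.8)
The plan is to prove both assertions by showing that the quantity $\chi(\widetilde V)+K^2$ can be expressed purely in terms of topological invariants of the link $L$, using the classical "Gauss--Bonnet / Noether-type" identity for resolutions. First I would recall that for any good resolution $\widetilde V$ of a normal surface singularity, with exceptional divisor $E=\bigcup_{i=1}^n E_i$, the dual intersection graph records $g_i=g(E_i)$, the self-intersections $E_i^2$, and the incidence data $E_i\cdot E_j$. Writing $K=\sum k_i E_i$, the self-intersection number is $K^2=\sum_i k_i (K\cdot E_i)=\sum_i k_i(2g_i-2-E_i^2)$ by the Adjunction Formula; equivalently $K^2=K\cdot(A^{-1}v)$ where $v$ is the vector with entries $2g_i-2-E_i^2$, so $K^2$ depends only on $A$ and the $g_i$, i.e.\ only on the graph $\Gamma$. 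Likewise $\chi(\widetilde V)=\chi(E)$, since $E$ is a deformation retract of $\widetilde V$, and by inclusion-exclusion on the components of a good resolution $\chi(E)=\sum_i\chi(E_i)-\#\{\text{intersection points}\}=\sum_i(2-2g_i)-\#\{(i,j):i<j,\ E_i\cap E_j\neq\emptyset\}$, which again depends only on $\Gamma$.

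Next, the key step: I would invoke the Noether-type formula expressing $\chi(\widetilde V)+K^2$ in terms of data intrinsic to the singularity and its link, rather than to $\Gamma$. The cleanest route is to use the $\mu$-invariant / signature approach: for the negative-definite plumbed $4$-manifold $\widetilde V$ with boundary $L$, one has, combining Wall's non-additivity and the index theorem for manifolds with boundary, a relation of the shape $K^2+n = -\operatorname{sign}(\widetilde V) - 2\,\eta(L) + (\text{correction terms depending only on }L)$, where $n=b_2(\widetilde V)$, $\operatorname{sign}(\widetilde V)=-n$ (negative definite), and $\eta(L)$ is a spectral/Casson--Walker--type invariant of $L$; meanwhile $\chi(\widetilde V)=1+n$ (since $\widetilde V$ is simply connected up to the link and $b_1=b_3=0$ for a plumbing along a tree, though in our case $\Gamma_F$ is a tree so this holds). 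Adding, the $n$'s cancel and one gets $\chi(\widetilde V)+K^2$ equal to an expression in $\operatorname{sign}(\widetilde V)+n=0$ and invariants of $L$ alone. The statement that this combination is a topological invariant of the oriented link $L$ is in fact exactly the content of the Durfee--Laufer–type formula; I would cite \cite{DAH78} and \cite{Sea} for the Gorenstein case and note that the proof there only uses numerical-Gorenstein-ness (i.e.\ integrality of $K$), which is what guarantees the relevant Wu-class correction term is even and hence the formula survives as an \emph{integer}-valued topological invariant. The orientation-preserving homeomorphism $L\cong L_*$ then transports every ingredient, giving $\chi(\widetilde V)+K^2=\chi(\widetilde V_*)+K_*^2$.

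Concretely, the cleanest self-contained argument I would actually write goes through the Laufer formula: Laufer \cite{DAH78} (building on work for Gorenstein singularities) proves that for a numerically Gorenstein singularity, $\chi(\widetilde V)+K^2+n$ (with $n$ the number of vertices, i.e.\ $b_2$) equals $-\sum_i(\text{length of continued-fraction/linking contributions})$ which reassemble into a function of the Seifert or plumbing-\emph{boundary} data only — and since the boundary is $L$, and the left side minus $n$ is $\chi(\widetilde V)+K^2$ (because adding/removing a $(-1)$-vertex, i.e.\ blowing up, changes $\chi$ by $+1$, changes $n$ by $+1$, and changes $K^2$ by $-1$, so $\chi+K^2$ is blow-up invariant while $\chi+K^2+n$ changes), one concludes independence of resolution. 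The "moreover" part is then automatic: both $\widetilde V$ and $\widetilde V_*$ resolve germs with orientation-preservingly homeomorphic links, the minimal good resolution is determined by $L$ up to blow-ups, so one may compare the two quantities after passing to a common plumbed boundary-presentation of $L$; since $\chi+K^2$ is independent of the resolution and depends only on $L$, the two sides agree.

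The main obstacle I anticipate is justifying \emph{independence of the resolution} with full rigor without simply quoting Laufer — i.e.\ checking that $\chi(\widetilde V)+K^2$ is unchanged under a single blow-up at a point of $E$ (smooth point of a component, or a crossing of two components), and that any two good resolutions are related by a chain of such blow-ups and blow-downs (the latter is standard but the former requires a short computation of how $K$, hence $k_i$, transforms, and how $\chi(E)$ transforms, under each type of blow-up; one must verify the $+1$ in $\chi$ is exactly compensated by a $-1$ in $K^2$, using $K_{\widetilde V'} = \sigma^* K_{\widetilde V} + E'$ for the blow-up $\sigma$ with exceptional curve $E'$, whence $K_{\widetilde V'}^2 = K_{\widetilde V}^2 - 1$). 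The numerically-Gorenstein hypothesis enters precisely to ensure $K$ is an \emph{integral} class so that $K^2\in\Z$ and the final quantity is a genuine integer invariant; without it one only gets a rational number and the "new pages" argument downstream would lose its arithmetic force. Once resolution-independence is in hand, the comparison $\chi(\widetilde V)+K^2=\chi(\widetilde V_*)+K_*^2$ follows since an orientation-preserving homeomorphism of links identifies the minimal good resolution graphs (by Neumann's theorem on plumbing graphs, \cite{Neu:calcplumb}), and the invariant is computed from that graph.
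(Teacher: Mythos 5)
Your proposal is correct and, once the digression through Wall non-additivity and $\eta$-invariants is discarded (it is vague, partly garbled --- e.g.\ the claim that $\chi(\widetilde V)+K^2+n$ reassembles into boundary data contradicts your own later observation that this quantity is not blow-up invariant --- and never actually used), it lands on exactly the argument the paper sketches: $\chi(\widetilde V)+K^2$ is read off the dual graph, is unchanged under a single blow-up because $K_{\widetilde V'}=\sigma^*K_{\widetilde V}+E'$ gives $K^2\mapsto K^2-1$ while $\chi\mapsto\chi+1$, any two good resolutions are connected by such blow-ups, and the second statement follows from Neumann's theorem identifying the minimal good resolutions of singularities with orientation-preserving homeomorphic links. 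The paper additionally offers an alternative route via the birational invariance of the Todd genus (compactifying $\widetilde V$ and using $\chi+K^2=12\,\mathrm{Td}$), but its primary route is the same direct computation you settle on.
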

This theorem is essentially well-known and it can be proved in several ways. The first statement can be proved by direct computation, showing that each time we blow up a smooth point of the exceptional divisor, the Euler-Poincar\'e characteristic of the resolution increases by 1, while the self-intersection of the canonical class diminishes by one. This computation is straight-forward but not easy.

Alternatively, the same statement can be proved by noticing that for a compact complex surface $M$, the invariant $\chi(M) +K(M)^2$ is 12-times the Todd genus $Td(M)$, which is a birational invariant, by   Hirzebruch-Riemann-Roch's Theorem for compact complex surfaces. One can then compactify the surface $\widetilde V$ by adding a divisor at infinity, whose singularities can be resolved, thus getting a smooth compactification $\widehat V$ of $\widetilde V$. The fact that the singularity is numerically  Gorenstein ensures that the Todd genus $Td(\widehat V)$ splits in two parts: one of these is $\chi(\widetilde V) +K^2$. The result then follows from the fact that $Td(\widehat V)$ remains constant under blowing ups.

We finally remark that for singularities that are Gorenstein and smoothable, the fact that $\chi(\widetilde V) +K^2$ does not depend on the choice of resolution is also a direct consequence of the Laufer-Steenbrink formula that we explain below, and the fact that the geometric genus is independent of the choice of resolution.

The second statement in this theorem is now an immediate consequence of the previous statement, together with Neumann's Theorem \cite[Th.~2]{Neu:calcplumb}, that if two normal surface singularities have orientation preserving homeomorphic links, then their minimal resolutions are homeomorphic.

So, we denote this invariant just by $\chi_L + K_L$, since it depends only on the link $L$.

\begin{defi}
A normal surface singularity germ $(V,0)$ of dimension $n \geq 1$ is \textbf{smoothable} if there exists a complex analytic space $(W,0)$ of dimension $n+1$ and a proper analytic map:
\begin{equation*}
\mathcal{F} \colon W \to \D \subset \C
\end{equation*}
where $\D$ is an open disc with centre at $0$, such that:
\begin{enumerate}[i)]
\item it is not a zero divisor in the local ring of $W$ at $0$, i.e. it is not flat;
\item $\mathcal{F}^{-1}(0)$ is isomorphic to $V$; and
\item $\mathcal{F}^{-1}(t)$ is non-singular for $t \neq 0$.
\end{enumerate}
\end{defi}

Following \cite{DAH78}, \cite{MR713277} and \cite{MR0450287}, we call the manifold $\mathcal{F}^{-1}(t)$ a \textbf{smoothing} of $V$. Notice that if $(V,0)$ is a hypersurface germ, then it is always smoothable and the smoothing $\mathcal{F}^{-1}(t)$ is the Milnor fibre.

Now suppose we have a normal Gorenstein surface singularity germ $(V,0)$ and suppose it is smoothable. Let $\widetilde V$ be a resolution of $(V,0)$ and let $V^\#$ be a smoothing. Then the Laufer-Steenbrink formula (see \cite{MR0450287} and \cite{MR713277}) states:
\begin{equation*}
\chi(V^\#) = \chi(\widetilde V) + K^2 + 12 \rho_g(V,0) \ ,
\end{equation*}
where $K$ is the canonical class of the resolution and $\rho_g(V,0)$ is the geometric genus, which is an integer, independent of the choice of resolution (see \cite{MR0199191}). Then we have the following result.

\begin{theo}(see for instance \cite[\S~4, Cor~1]{MR713273})\label{cong}
Let $(V,0)$ be a normal Gorenstein complex surface singularity with link $L$. If $(V,0)$ is smoothable, then one has
\begin{equation*}
\chi_{L} + K_L \equiv \chi(V^{\prime}) \pmod{12}
\end{equation*}
where $V^{\prime}$ is a smoothing of $V$ and $\chi_L + K_L$ is the invariant of $L$ previously defined.
\end{theo}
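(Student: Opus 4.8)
The plan is to combine the Laufer–Steenbrink formula with the fact, recalled just before the statement, that $\chi(\widetilde V)+K^2$ is an invariant of the link $L$ alone, denoted $\chi_L+K_L$. So first I would take a good resolution $\widetilde V$ of $(V,0)$ and a smoothing $V'=\mathcal F^{-1}(t)$, which exists by the smoothability hypothesis; since $(V,0)$ is Gorenstein, the Laufer–Steenbrink formula applies and gives
\begin{equation*}
\chi(V') = \chi(\widetilde V) + K^2 + 12\,\rho_g(V,0)\ .
\end{equation*}

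Next I would invoke Proposition \ref{independent} to identify $\chi(\widetilde V)+K^2$ with the link invariant $\chi_L+K_L$; this is legitimate precisely because a Gorenstein germ is numerically Gorenstein (noted in the remarks following the definition of numerically Gorenstein), so the hypothesis of Proposition \ref{independent} is met. Substituting, we get
\begin{equation*}
\chi(V') = \chi_L + K_L + 12\,\rho_g(V,0)\ .
\end{equation*}

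The final step is to observe that $\rho_g(V,0)$ is an integer — this is exactly the content of the cited fact that the geometric genus is a well-defined integer independent of the resolution — so that the term $12\,\rho_g(V,0)$ vanishes modulo $12$, yielding
\begin{equation*}
\chi_L + K_L \equiv \chi(V') \pmod{12}\ ,
\end{equation*}
which is the claimed congruence. I should also remark that $\chi(V')$ does not depend on the choice of smoothing $t$, either because all smoothings of a fixed germ are diffeomorphic (local triviality of the family away from the central fibre) or simply because the left-hand side is independent of it; this makes the statement well-posed.

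Honestly, there is no real obstacle here: every ingredient — Laufer–Steenbrink, the link-invariance of $\chi(\widetilde V)+K^2$, and the integrality of $\rho_g$ — has been set up in the preceding paragraphs, and the proof is a two-line substitution followed by reduction mod $12$. If anything, the only point requiring a moment's care is making sure the Gorenstein hypothesis is used in both places it is needed: once to license the Laufer–Steenbrink formula, and once (via numerical Gorensteinness) to license Proposition \ref{independent}.
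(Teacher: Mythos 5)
Your proposal is correct and is essentially the argument the paper intends: the theorem is presented as an immediate consequence of the Laufer--Steenbrink formula combined with Proposition~\ref{independent} (applicable since Gorenstein implies numerically Gorenstein) and the integrality of the geometric genus $\rho_g(V,0)$, which is exactly your two-line substitution and reduction mod $12$.
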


\begin{theo}\label{opb1}
Let $F \colon (\C^3,0) \cong (\R^6,0) \to (\C,0) \cong (\R^2,0)$ be the real polynomial function defined by
\begin{equation*}
F(x,y,z)= \overline{xy}(x^2+y^3) + z^r \ \text{with} \ r>2 \ .
\end{equation*}
Let $(V,p)$ a complex analytic germ such that $L_V \cong L_F$; then $(V,p)$ is not numerically Gorenstein.
\end{theo}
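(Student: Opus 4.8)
The plan is to compute the canonical class $K$ of the good resolution associated to the plumbing graph $\Gamma_F$ produced in Section \ref{link}, and to exhibit one coefficient $k_i$ that fails to be an integer. By Neumann's Theorem \cite[Th.~2]{Neu:calcplumb}, the minimal good resolution of $(V,p)$ is homeomorphic to the minimal good resolution associated to $L_F$, so the canonical class of any good resolution of $(V,p)$ is integral if and only if the canonical class of the resolution dual to $\Gamma_F$ is integral (numerical-Gorenstein is a property of the link by Remark \ref{integral}). Hence it suffices to show $K$ is not integral for $\Gamma_F$.

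First I would write down $\Gamma_F$ explicitly in the case $p=2$, $q=3$: by the Example following Theorem \ref{thprinc}, $\delta=\gcd(r,1)=1$, the Seifert invariants are $\bigl(0;-\tfrac{1}{6r};(3r,3r-1),(2r,2r-1),(r,1)\bigr)$, the three continued-fraction chains are $[3r]$, $[2r]$ and $[\underbrace{2,\dots,2}_{r-1}]$, and the central vertex has weight $-1$. So $\Gamma_F$ is a star-shaped tree: a central curve $E_0$ with $E_0^2=-1$ and $g_0=0$, joined to a single $(-3r)$-curve, a single $(-2r)$-curve, and a chain of $(r-1)$ successive $(-2)$-curves. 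Next I would solve the Adjunction system $2g_i-2=E_i^2+K\cdot E_i$, i.e. $-2=E_i^2+K\cdot E_i$ for every vertex since all genera are zero, writing $K=\sum k_i E_i$ and using the intersection matrix $A$. Along a chain of $(-2)$-curves attached at one end, the adjunction equations force the coefficients along that chain to be linear (the canonical coefficients of an $A_{n}$-type chain behave like an arithmetic progression), and reading off the equation at the central vertex pins down $k_0$ and the remaining coefficients as explicit rational functions of $r$.

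The key step is then to show that the resulting system has no integral solution, equivalently that $\det A$ does not divide the relevant cofactor combination; concretely one computes, say, the coefficient attached to the $(-3r)$-vertex or to $E_0$ and checks it is not an integer for every $r>2$. I expect the cleanest route is to use the identity $-e=e_0+\sum\beta_i/\alpha_i-3$ together with the fact (see \cite{Neu:calcplumb}, \cite{MR1359516}) that for a star-shaped graph the central canonical coefficient $k_0$ is given by a closed formula in terms of $e_0$ and the $(\alpha_i,\beta_i)$, of the shape $k_0=-1+\bigl(\text{something}\bigr)/\bigl(\alpha_1\alpha_2\alpha_3|e_0|\bigr)$; substituting the values above the denominator is essentially $6r$ while the numerator is visibly not a multiple of it, so $k_0\notin\Z$.

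The main obstacle is purely bookkeeping: setting up and inverting the (negative definite) intersection matrix of the star-shaped graph, or equivalently deriving and correctly applying the closed-form expression for the canonical coefficients of a Seifert-fibred graph, and then carrying out the divisibility/non-divisibility check uniformly in the parameter $r$. There is no conceptual difficulty once the graph is in hand — the content is the explicit verification that the single rational coefficient one isolates has denominator strictly larger than $1$. One should also note at the end that, combined with the Corollary after Theorem \ref{thprinc} (which exhibits $L_F$ as the link of a \emph{normal} complex surface singularity), this proves exactly the ``Different Binding'' claim of the Introduction: $L_F$ is a genuine surface-singularity link, yet it cannot bound a hypersurface singularity in $\C^3$, since every isolated hypersurface singularity is Gorenstein and hence numerically Gorenstein.
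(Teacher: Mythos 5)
Your overall strategy coincides with the paper's: reduce to the plumbing graph $\Gamma_F$ via Remark \ref{integral} and Neumann's theorem, write $K=\sum k_iE_i$, solve the adjunction system on the star-shaped graph (central $(-1)$-vertex, a $(-3r)$-vertex, a $(-2r)$-vertex, and a chain of $r-1$ curves of self-intersection $-2$), and exhibit a non-integral coefficient. The graph, the linearity of the coefficients along the $(-2)$-chain, and the concluding remark about hypersurface singularities being Gorenstein are all correct.

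However, the step where you actually certify non-integrality would fail as stated. Solving the system gives $k_0=10-6r$ for the central vertex, which is an integer for every $r$, so your ``cleanest route'' via a closed formula showing $k_0\notin\Z$ is a dead end. Moreover, no single coefficient is non-integral uniformly in $r>2$: the $(-3r)$-vertex carries $k_{1,1}=\tfrac{4}{r}-3$, which \emph{is} an integer when $r=4$; the $(-2r)$-vertex carries $k_{2,1}=\tfrac{6}{r}-4$, integral when $r=3$ or $r=6$; the end of the chain carries $\tfrac{10}{r}-6$, integral when $r=5$ or $r=10$. So the check ``this one coefficient is not an integer for every $r>2$'' is false for each candidate you name. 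The paper's argument needs two coefficients in tandem: $k_{1,1}\in\Z$ forces $r\mid 4$, hence $r=4$ since $r>2$, and then $k_{2,1}=\tfrac{6}{4}-4\notin\Z$. Once you carry out the computation you would find this, but as written the verification plan contains a step that does not go through.
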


\begin{cor}
There not exist a complex analytic germ $G \colon (\C^3,0) \to (\C,0)$ with isolated singularity at the origin such that the link $L_G$ is isomorphic to the link $L_F$.
\end{cor}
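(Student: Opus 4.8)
The plan is to prove Theorem~\ref{opb1} by explicitly computing the canonical class $K$ of the good resolution $\widetilde V$ whose dual graph is the plumbing graph $\Gamma_F$ exhibited in the example for $F(x,y,z)=\overline{xy}(x^2+y^3)+z^r$, and showing that at least one of the coefficients $k_i$ in the expansion $K=\sum k_i E_i$ fails to be an integer. By Remark~\ref{integral}, numerical Gorensteinness is independent of the choice of good resolution (it is equivalent to topological triviality of the canonical bundle), so it suffices to check non-integrality of $K$ on this one graph; since every Gorenstein germ is numerically Gorenstein, this will show that no complex analytic germ $(V,p)$ with $L_V\cong L_F$ can be Gorenstein either.

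First I would set up the Adjunction Formula $2g_i-2 = E_i^2 + K\cdot E_i$ for each of the $n$ vertices of $\Gamma_F$. Since the graph is a tree and all genera $g_i$ are $0$, this becomes the linear system $AK = D$, where $A = A_{\Gamma_F}$ is the (negative definite, hence invertible) intersection matrix, $K = (k_1,\dots,k_n)^t$, and $D$ is the vector with entries $-2 - E_i^2 = -2 + e_i$ (i.e.\ $E_i^2 = -e_i$). Because $A$ is invertible over $\Q$ there is a unique rational solution; the task is then to solve this system for the specific weights read off from the example, namely the central vertex of weight $-1$, the two "arms" of length one with weights $-3r$ and $-2r$, and the long chain of $(r-1)$ vertices each of weight $-2$. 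The chain of $(-2)$-curves is the key piece: the adjunction equations along a string of $(-2)$-curves force the $k_i$ to vary linearly, and matching boundary conditions at the two ends of the chain (one end attached to the $-1$ vertex, the other a leaf) will produce a value with denominator not dividing into an integer — concretely I expect a denominator like $r$ or $6r$ coming from the Seifert data $e_0 = -1/(6r)$. A clean way to organise the computation is to use the standard fact that $K$ is dual, via $A$, to the vector $D$ above, and that for star-shaped graphs the coefficients $k_i$ can be computed branch-by-branch using continued-fraction identities; alternatively, one can compute $K^2 = D^t A^{-1} D$ and the coefficient at the central vertex directly by Cramer's rule.

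The main obstacle will be carrying out the linear algebra cleanly enough to isolate a single non-integral coefficient without drowning in the $r$-dependence: the graph has size growing with $r$, so I would avoid inverting $A$ wholesale and instead exploit the tree structure. A convenient device: on each maximal string of $(-2)$-curves ending in a leaf, adjunction forces $k$ to be an arithmetic progression pinned by $2g-2 = -2$ at interior vertices and by the leaf equation at the end; propagating this to the node where the string meets the rest of the graph expresses the relevant $k_i$ as a rational number whose denominator is controlled by the length $r-1$ of the string. Then the single equation at the central $(-1)$-vertex, together with the two short arms (which contribute fractions with denominators $3r$ and $2r$), yields the coefficient there, and one checks it is not in $\Z$. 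I would present the computation for the $-1$-vertex coefficient $k_0$, show $k_0 = -1 + \tfrac{1}{6r}\cdot(\text{something})$ is non-integral (using $r>2$), and conclude by Remark~\ref{integral} and the fact that $L_F$ is the link of a normal surface singularity (guaranteed by the Corollary following Theorem~\ref{thprinc}) that no such $(V,p)$ is numerically Gorenstein; the Corollary to Theorem~\ref{opb1} then follows since an isolated hypersurface singularity in $\C^3$ is Gorenstein.
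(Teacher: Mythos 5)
Your overall strategy is exactly the paper's: write the Adjunction Formula as the linear system $AK=D$ over the plumbing graph $\Gamma_F$, solve it using the tree structure (the string of $(-2)$-curves indeed forces an arithmetic progression $k_{3,j}=(r-j)\,k_{3,r-1}$), exhibit a non-integral coefficient, invoke Remark~\ref{integral} to conclude $(V,p)$ is not numerically Gorenstein, and deduce the Corollary from the fact that an isolated hypersurface singularity in $\C^3$ is Gorenstein. However, the specific coefficient you plan to test fails you: carrying out the elimination gives $k=10-6r$ at the central $(-1)$-vertex, which is an \emph{integer} for every $r$, not something of the form $-1+\tfrac{1}{6r}(\cdots)$. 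If you present only the computation at the central vertex, as you propose, you prove nothing.

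The non-integrality lives on the short arms: one finds $k_{1,1}=\tfrac{4}{r}-3$ and $k_{2,1}=\tfrac{6}{r}-4$ (and $k_{3,r-1}=\tfrac{10}{r}-6$ at the end of the chain). Note also that no single one of these suffices for all $r>2$: $k_{1,1}\in\Z$ exactly when $r=4$, $k_{2,1}\in\Z$ exactly when $r\in\{3,6\}$, and $k_{3,r-1}\in\Z$ exactly when $r\in\{5,10\}$. You must therefore check at least two coefficients and observe that the corresponding exceptional values of $r$ are disjoint --- the paper uses $k_{1,1}$ and $k_{2,1}$: integrality of $k_{1,1}$ forces $r=4$, which makes $k_{2,1}=-\tfrac{5}{2}\notin\Z$. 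With that correction your argument goes through and coincides with the paper's proof.
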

\begin{proof}[of Theorem \ref{opb1}]
The linear system we need to solve in order to compute the canonical class $K$ of the resolution $\tilde{V}_F$ is of the form:
\begin{align*}
k-k_{1,1}-k_{2,1}-k_{3,1}       & =1 \ , \\[5pt]
-k+3r k_{1,1}                   & =2-3r \ , \\[4pt]
-k_+2 rk_{2,1}                  & =2-2r \ , \\[5pt]
-k+2 k_{3,1}-k_{3,2}            & =0 \ , \\
-k_{3,1}+2k_{3,2}-k_{3,3}       & =0 \ , \\
\cdots & \\
-k_{3,r-3}+2k_{3,r-2}-k_{3,r-1} & =0 \ , \\
-k_{3,r-2}+2k_{3,r-1}           & =0 \ . \\
\end{align*}
We solve the system and we obtain:
\begin{equation*}
\begin{array}{r@{}lcr@{}lcr@{}lcr@{}l}
k &= 10-6r \ , && k_{1,1} &= \frac{4}{r}-3 \ , && k_{2,1} &= \frac{6}{r}-4 \ , && k_{3,1}   &= -\frac{10}{r} -6r +16 \ , \\[5pt]
  &        &&         &                &&         &                &&    \cdots &                 \\[5pt]
  &        &&         &                &&         &                && k_{3,r-2} &= \frac{20}{r} -12 \ , \\[5pt]
  &        &&         &                &&         &                && k_{3,r-1} &= \frac{10}{r} -6 \ .
\end{array}
\end{equation*}
Then $k_{1,1}$ is an integer only if $r=4$ (since $r \neq 2$), but then $k_{2,1}$ is not an integer. We can conclude that the canonical class $K$ has not integer coefficients.
\end{proof}

\begin{theo}\label{opb2}
Let $F \colon (\C^3,0) \cong (\R^6,0) \to (\C,0) \cong (\R^2,0)$ be the real polynomial function defined by
\begin{equation*}
F(x,y,z)= \overline{xy}(x^2+y^q) + z^2 \ \text{with} \ q>2 \ .
\end{equation*}
Then the Milnor fibre $\mathcal{F}$ of $F$ is not the smoothing of a normal Gorenstein complex surface singularity $(X,p)$.
\end{theo}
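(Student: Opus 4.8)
The plan is to derive a contradiction from Theorem \ref{cong}, by computing the Euler characteristic of the Milnor fibre $\mathcal{F}$ of $F$ in two different ways. On the topological side, write $f(x,y)=\overline{xy}(x^2+y^q)$, so that $F=f+z^2$ is a sum of polar weighted homogeneous polynomials in disjoint variables, and apply the Join Theorem of \cite{Cis09}: it identifies $\mathcal{F}$, up to homotopy equivalence, with the join $\mathcal{F}_f*\mathcal{F}_{z^2}$ of the two Milnor fibres. Since $\chi(\mathcal{F}_{z^2})=2$ and $\chi(X*Y)=\chi(X)+\chi(Y)-\chi(X)\chi(Y)$, this gives $\chi(\mathcal{F})=2-\chi(\mathcal{F}_f)$. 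The remaining ingredient is $\chi(\mathcal{F}_f)$ for the mixed plane germ $\overline{xy}(x^2+y^q)=\overline{h_1}h_2$ with $h_1=xy$ and $h_2=x^2+y^q$; I would obtain it either from Pichon's description in \cite{MR2115674} of the Milnor fibre of such $\overline{h_1}h_2$-germs via the minimal resolution of $h_1h_2$, or, since $f$ is polar weighted homogeneous and hence has periodic Milnor monodromy, from a Lefschetz/averaging computation over the induced $\Sp^1$-action. This produces an explicit value of $\chi(\mathcal{F})$ in terms of $q$, recorded in the residue classes $q=4a\pm1$ exactly as in Section \ref{link}.

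Now suppose, for contradiction, that $\mathcal{F}$ is diffeomorphic to a smoothing $V^{\#}$ of a normal Gorenstein complex surface singularity $(X,p)$. Since the boundary of a smoothing is the link of the singularity while $\partial\mathcal{F}=L_F$ (the binding of the open book given by the Milnor fibration of $F$), the link $L_X$ is orientation-preserving homeomorphic to $L_F$. Being Gorenstein, $(X,p)$ is numerically Gorenstein, so the canonical class $K$ of a good resolution is integral, and by Proposition \ref{independent} the integer $\chi(\widetilde X)+K^2$ depends only on $L_F$; in particular it can be computed on the good resolution whose dual graph is the plumbing graph $\Gamma_F$ of Section \ref{link}. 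Concretely, one solves the adjunction equations $K\cdot E_i=-2-E_i^2$ for the coefficients $k_i$ of $K=\sum_i k_iE_i$ (all vertices of $\Gamma_F$ have genus $0$ here, since $\delta=1$), reads off $\chi(\widetilde X)=n+1$ where $n$ is the number of vertices (the exceptional divisor being a tree of rational curves), and computes $K^2=\sum_i k_i(-2-E_i^2)$, thus obtaining $\chi_L+K_L$ explicitly in terms of $q$. If this linear system had no integral solution, $L_F$ would fail to be numerically Gorenstein and the theorem would already follow, which is why integrality is checked en route.

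Finally, Theorem \ref{cong} forces $\chi_L+K_L\equiv\chi(V^{\#})=\chi(\mathcal{F})\pmod{12}$, so it is enough to verify that the two explicit expressions for $\chi(\mathcal{F})$ and for $\chi_L+K_L$ obtained above are incongruent modulo $12$, in each of the relevant subcases ($q\equiv 1,3\pmod{4}$, and $a=1$ versus $a\ge 2$); this gives the desired contradiction. I expect the main obstacle to be the computation of $\chi(\mathcal{F}_f)$: the linear algebra producing $K$ and $K^2$ is routine once $\Gamma_F$ is written down, but pinning down the Euler characteristic of the Milnor fibre of the mixed germ $\overline{xy}(x^2+y^q)$ genuinely requires the finer topology of $\overline{h_1}h_2$-singularities; a secondary nuisance is organising the bookkeeping so that the final incongruence holds uniformly over all subcases, supplementing the mod-$12$ check when necessary with the inequality $\chi(\mathcal{F})\ge\chi_L+K_L$ that comes from $\rho_g\ge 0$ in the Laufer--Steenbrink formula.
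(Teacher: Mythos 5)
Your proposal follows essentially the same route as the paper: compute $\chi(\mathcal{F})$ via the Join Theorem of \cite{Cis09} (with $\chi(\mathcal{F}_f)$ read off from the resolution of the plane curve germ as in \cite{MR2115674}), compute the integral canonical class $K$ on the plumbing graph $\Gamma_F$ together with $\chi(\widetilde V)+K^2$, and derive the contradiction from the Laufer--Steenbrink congruence of Theorem \ref{cong} in the subcases $q=4a\pm 1$. The plan is correct and matches the paper's proof in all essentials.
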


\begin{cor}
The open-book decomposition of the sphere $\Sp^5$ given by the Milnor fibration of $F$ is not given by the Milnor fibration of a normal Gorenstein complex surface singularity $(X,p)$.
\end{cor}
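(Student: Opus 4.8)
The plan is to derive a contradiction from the congruence of Theorem \ref{cong}. Suppose the Milnor fibre $\mathcal F$ of $F$ is diffeomorphic to a smoothing $V^{\#}$ of some normal Gorenstein complex surface singularity $(X,p)$. A smoothing has the link of $(X,p)$ as its boundary, while $\partial\mathcal F\cong L_F$, so this forces an orientation-preserving homeomorphism $L_X\cong L_F$; hence, by Proposition \ref{independent}, the invariant $\chi_{L_X}+K_{L_X}$ equals $\chi_{L_F}+K_{L_F}$ and may be computed from the plumbing graph $\Gamma_F$ obtained in Section \ref{link}. Since $(X,p)$ is, by hypothesis, normal, Gorenstein and smoothable, Theorem \ref{cong} yields
\[
\chi_{L_F}+K_{L_F}\equiv\chi(\mathcal F)\pmod{12}\ .
\]
It therefore suffices to compute both sides explicitly in terms of $q$ and to check that this congruence fails for every $q>2$.

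For the left-hand side I would use the plumbing graphs $\Gamma_F$ already displayed: as $\gcd(2,q)=1$, $q$ is odd, so $q=4a+1$ or $q=4a-1$ with $a\geq 1$, and in each case (including the degenerate value $a=1$) the shape of $\Gamma_F$ and the relevant continued fractions are known. The resolution $\widetilde V_F$ built from $\Gamma_F$ retracts onto a tree of rational curves, so $\chi(\widetilde V_F)=1+(\#\text{vertices of }\Gamma_F)$; and $K_F^2$ is obtained by solving the adjunction system $E_i^2+K\cdot E_i=-2$ (all genera are zero) for $K=\sum_i k_i\,E_i$ --- a linear system with the negative-definite intersection matrix of $\Gamma_F$ --- and then computing $K_F^2=\sum_i k_i\,(e_i-2)$, where $-e_i=E_i^2$. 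Carrying this out in both residue classes produces $\chi_{L_F}+K_{L_F}$ as an explicit function of $q$.

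For the right-hand side I would invoke the Join Theorem of \cite{Cis09}: writing $F=f(x,y)+z^2$ with $f(x,y)=\overline{xy}(x^2+y^q)$, the Milnor fibre $\mathcal F$ is homotopy equivalent to the join $\mathcal F_f*\mathcal F_{z^2}$, and since $\mathcal F_{z^2}$ consists of two points, $\mathcal F\simeq\Sigma\mathcal F_f$ and $\chi(\mathcal F)=2-\chi(\mathcal F_f)$. It then remains to compute $\chi(\mathcal F_f)$ for the mixed germ $f$. Up to complex conjugation $f=\overline{(xy)\cdot\overline{(x^2+y^q)}}$, so $\mathcal F_f$ is diffeomorphic to the fibre of the fibred multilink $L_{xy}-L_{x^2+y^q}$ in $\Sp^3$ (Pichon, \cite{MR2115674}); its Euler characteristic is given by a formula of A'Campo type, computed from the minimal embedded resolution of $xy(x^2+y^q)$ together with the multiplicities of $x$, $y$ and $x^2+y^q$ along the exceptional curves --- only the rupture vertex and the valence-one exceptional curves contributing. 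This gives $\chi(\mathcal F_f)$, hence $\chi(\mathcal F)$, as an explicit function of $q$; one should note that it differs from $1-\mu$ of the holomorphic germ $xy(x^2+y^q)$ precisely because of the negative multiplicity, which is what makes the open book new.

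Finally I would compare the two explicit expressions modulo $12$ and check that $\chi_{L_F}+K_{L_F}\not\equiv\chi(\mathcal F)\pmod{12}$ for all $q>2$, contradicting the displayed congruence; this proves Theorem \ref{opb2}. The corollary is then immediate: if the open book of $\phi_F$ were equivalent to that of some holomorphic $\phi_G$ with $G\colon(\C^3,0)\to(\C,0)$, its page $\mathcal F$ would be the Milnor fibre of $G$, which is a smoothing of the hypersurface --- hence normal Gorenstein --- surface singularity $(G^{-1}(0),0)$, contradicting Theorem \ref{opb2}. The main obstacle is the pair of intertwined explicit computations: obtaining $\chi(\mathcal F_f)$ correctly for the genuinely mixed germ $f$ (one cannot simply read off a holomorphic Milnor number), and then checking that the resulting mod-$12$ incongruence persists \emph{uniformly} over both residue classes $q\equiv\pm1\pmod{4}$ and for every $q>2$, including the small values at which the plumbing graph degenerates.
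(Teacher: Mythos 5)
Your proposal is correct and follows essentially the same route as the paper: Theorem \ref{opb2} is proved by computing the invariant $\chi_L+K_L$ from the plumbing graph via the adjunction system, computing $\chi(\mathcal F)$ via the Join Theorem together with the Euler characteristic of the fibre of the multilink $L_{xy}-L_{x^2+y^q}$ (read off the resolution graphs with their multiplicities), and checking the failure of the Laufer--Steenbrink congruence $\pmod{12}$ in both residue classes $q\equiv\pm 1\pmod 4$. The deduction of the corollary from the theorem (a page of such an open book would be a smoothing of a normal Gorenstein singularity) is the same immediate argument the paper intends.
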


\begin{proof}[of Theorem \ref{opb2}]
We will consider two cases:

\noindent \textbf{Case 1.} Let $q=4a+1$, then we solve the linear system in order to get the canonical class $K$ and we obtain:
\begin{align*}
k &= -4a \ , & k_{1,1} &= -3a \ ,                        & k_{2,1} &= -3a \ , & k_{3,1} &= -2a \ .\\
  &          & k_{1,2} &= -2a \ ,                        & k_{2,2} &= -2a \ , &         &     \\
  &          & k_{1,j} &= -a+(j-3) \, , \ 3 \leq j \leq a+2 & k_{2,3} &= -a  \ , &         &
\end{align*}

Since the canonical class $K$ has integer coefficients, there could exists a complex analytic germ $G \colon (\C^3,0) \to (\C,0)$ with isolated singularity with link $L_G$ homeomorphic to the link $L_F$.

Now we proceed to see if the open book fibrations given by $F$ and $G$ are equivalent.

In order to see if the  Milnor fibre $\mathcal{F}$ is diffeomorphic to a smoothing of a normal Gorenstein complex surface singularity we compute the Euler characteristic of  $\mathcal{F}$ and the Euler characteristic of the resolution $\tilde{V}_F$  and we apply Theorem \ref{cong}.

We compute the Euler characteristic of $\mathcal{F}$ using the Join Theorem for polar weighted homogeneous polynomials (given in \cite{Cis09}); this result says that the Milnor fibre of the sum (over independent variables) of two polar weighted homogeneous polynomials is homotopically equivalent to the join of the Milnor fibres of the polar weighted homogeneous polynomials. Also we have
\begin{equation*}
\chi(\tilde{V})= \sum_{i=1}^n \chi(E_i) - \sum_{i<j} E_i \cdot E_j \ , 
\end{equation*}
where $E_i$ is one of the irreducible components of $\tilde{V}$ and
\begin{equation*}
K^2 = K^T A K \ ,
\end{equation*}
where $A$ is the intersection matrix of the plumbing graph $\Gamma$.

In our case, the plumbing graph has $a+7$ vertices and $a+6$ edges, then
\begin{align*}
\chi(\tilde{V}_{F}) &= \chi(E) + \sum_{\substack{i=1,j=1}}^{3,a+2} \chi(E_{i,j}) -\sharp(E \cap E_{i,1})_{i=1,2,3} - \sharp(E_{i,j} \cap E_{i, j^{\prime}})_{i=1,2,3, j \neq j^{\prime}} \\
                    &=(a+7)(2)-(a+6)=a+8 \ .
\end{align*}
Also
\begin{align*}
K^2 &= K^T A K \\
    &=(0,0,0,1,\underbrace{0,\ldots,0}_{(a-2)},1,0,0,0)\\
    &=k_{1,3}+k_{1,a+2}= -a + \bigr(-a+(a+2-3)\bigl)\\
    &=-(a+1) \ .
\end{align*}
i.e. we have
\begin{equation*}
\chi(\tilde{V}_F) + K^2 = (a+8)-(a+1)=7 \ .
\end{equation*}

When $a=1$, the resolution of $f(x,y)=\overline{xy}(x^2+y^5)$ is given by the graph
\begin{equation*}
\xy
(0,0)*{\bullet}="E4";
(4,-3)*{-1};
(2,3)*{(3)};
{\ar (0,0)*{}; (-10,10)*{}};
(-12,12)*{(1)};
(0,-15)*{\bullet}="E1";
(4,-15)*{-2};
(-4,-16)*{(1)};
{\ar (0,-15)*{}; (-10,-5)*{}};
(-10,-3)*{(-1)};
(15,0)*{\bullet}="E3";
(15,-3)*{-3};
(15,3)*{(1)};
(30,0)*{\bullet}="E2";
(30,-3)*{-2};
(29,4)*{(0)};
{\ar (30,0)*{}; (40,10)*{}};
(40,12)*{(-1)};
"E4";"E1" **\dir{-};
"E4";"E3" **\dir{-};
"E3";"E2" **\dir{-};
\endxy
\end{equation*}
On the other hand, when $a \geq 2$, the resolution of $f(x,y)=\overline{xy}(x^2+y^{4a+1})$ is given by the graph
\begin{equation*}
\xy
(-8,0)*{\bullet}="E4";
(-5,-3)*{-1};
(-3,3)*{(4a-1)};
{\ar (-8,0)*{}; (-18,10)*{}};
(-20,12)*{(1)};
(-8,-15)*{\bullet}="E1";
(-4,-15)*{-2};
(-16,-16)*{(2a-1)};
{\ar (-8,-15)*{}; (-18,-5)*{}};
(-18,-3)*{(-1)};
(15,0)*{\bullet}="E3";
(15,-3)*{-3};
(15,3)*{(2a-1)};
(30,0)*{\bullet}="E2";
(30,-3)*{-2};
(30,3)*{(2a-2)};
(35,0)*{}="E5";
"E5";"E2" **\dir{-};
(40,0)*{\cdots};
(45,0)*{}="E6";
(50,0)*{\bullet}="E7";
"E6";"E7" **\dir{-};
(50,-3)*{-2};
(50,3)*{(2)};
(65,0)*{\bullet}="E8";
"E7";"E8" **\dir{-};
(65,-3)*{-2};
(65,3)*{(1)};
(80,0)*{\bullet}="E9";
"E8";"E9" **\dir{-};
(80,-3)*{-2};
(79,3)*{(0)};
{\ar (80,0)*{}; (90,10)*{}};
(90,12)*{(-1)};
"E4";"E1" **\dir{-};
"E4";"E3" **\dir{-};
"E3";"E2" **\dir{-};
\endxy
\end{equation*}

Let $\mathcal{F}_f = \bigvee_{i=1}^k \Sp^1_i$ be the Milnor fibre of $f(x,y)=\overline{xy}(x^p+y^q)$ and let $\mathcal{F}_g$ be the Milnor fibre of the function $g(z)=z^r$ with $z \in \C$; then we have
\begin{equation*}
\chi(\mathcal{F}) = \chi(\mathcal{F}_f \ast \mathcal{F}_g) = \chi \Biggl(\bigvee_{j=1}^{(r-1)k} \Sp^2_j \Biggr) \ .
\end{equation*}
Then, for both cases we have
\begin{equation*}
\chi(\mathcal{F})= (2-1) \chi(V_f)=(2-1)(4a-1)=1-4a \ ,
\end{equation*}
and when
\begin{align*}
\qquad \qquad \qquad a &\equiv 1 \pmod{3} \ , & 1-4a &\equiv 9 \pmod{12} \ , \qquad \qquad \qquad \\
\qquad \qquad \qquad a &\equiv 2 \pmod{3} \ , & 1-4a &\equiv 5 \pmod{12} \ , \qquad \qquad \qquad \\
\qquad \qquad \qquad a &\equiv 0 \pmod{3} \ , & 1-4a &\equiv 1 \pmod{12} \ . \qquad \qquad \qquad
\end{align*}
Thus, the Milnor fibre $\mathcal{F}$ is not diffeomorphic to a smoothing of a normal Gorenstein complex surface singularity.

\noindent \textbf{Case 2.} Let $q=4a-1$ with $a \in \Z^{+}$, then the solutions to the linear system which gives the canonical class $K$ are:
\begin{align*}
k &= -2(2a-1) \ , & k_{1,1} &= -a \ ,                            & k_{2,1} &= -a \ ,  & k_{3,1} &= -2a+1 \ . \\
  &               & k_{1,j} &= -a+(j-1) \, , \ \ 2 \leq j \leq a &         &          &         & \\
\end{align*}

Since the canonical class $K$ is integral, there could be a complex analytic germ $G \colon (\C^3,0) \to (\C,0)$ with isolated singularity with link $K_G$ homeomorphic to the link $L_F$.

Now we proceed to see if the open book fibrations given by $F$ and $G$ are equivalent as before. Firstly, the plumbing graph has $a+3$ vertices and $a+2$ edges, then
\begin{align*}
\chi(\tilde{V}_{F}) &= \chi(E) + \sum_{\substack{i=1,j=1}}^{3,a} \chi(E_{i,j}) -\sharp(E \cap E_{i,1})_{i=1,2,3} - \sharp(E_{i,j} \cap E_{i, j^{\prime}})_{i=1,2,3, j \neq j^{\prime}} \\
                    &=(a+3)(2)-(a+2)=a+4 \ .
\end{align*}
Also
\begin{align*}
K^2 &= K^T A K \\
    &=(-1,3,\underbrace{0,\ldots,0}_{(a-2)},1,2,0)\\
    &=-k+3k_{1,1}+k_{1,a}+2k_{2,1}= 2(2a-1)+3(-a)+(-1)+2(-a)\\
    &=-(a+3) \ .
\end{align*}
i.e. we have
\begin{equation*}
\chi(\tilde{V}_F) + K^2 = (a+4)-(a+3)=1 \ .
\end{equation*}

When $a=1$, the resolution of $f(x,y)=\overline{xy}(x^2+y^3)$ is given by the graph
\begin{equation*}
\xy
(0,0)*{\bullet}="E4";
(4,-3)*{-1};
(2,3)*{(1)};
{\ar (0,0)*{}; (-10,10)*{}};
(-12,12)*{(1)};
(0,-15)*{\bullet}="E1";
(4,-15)*{-2};
(-4,-16)*{(0)};
{\ar (0,-15)*{}; (-10,-5)*{}};
(-10,-3)*{(-1)};
(15,0)*{\bullet}="E3";
(15,-3)*{-3};
(14,3)*{(0)};
{\ar (15,0)*{}; (25,10)*{}};
(24.5,12)*{(-1)};
"E4";"E1" **\dir{-};
"E4";"E3" **\dir{-};
\endxy
\end{equation*}
Also, when $a \geq 2$, the resolution of $f(x,y)=\overline{xy}(x^2+y^{4a-1})$ is given by the graph
\begin{equation*}
\xy
(-8,0)*{\bullet}="E4";
(-5,-3)*{-1};
(-3,3)*{(4a-3)};
{\ar (-8,0)*{}; (-18,10)*{}};
(-20,12)*{(1)};
(-8,-15)*{\bullet}="E1";
(-4,-15)*{-2};
(-16,-16)*{(2a-2)};
{\ar (-8,-15)*{}; (-18,-5)*{}};
(-18,-3)*{(-1)};
(15,0)*{\bullet}="E3";
(15,-3)*{-3};
(15,3)*{(2a-2)};
(30,0)*{\bullet}="E2";
(30,-3)*{-2};
(30,3)*{(2a-3)};
(35,0)*{}="E5";
"E5";"E2" **\dir{-};
(40,0)*{\cdots};
(45,0)*{}="E6";
(50,0)*{\bullet}="E7";
"E6";"E7" **\dir{-};
(50,-3)*{-2};
(50,3)*{(1)};
(65,0)*{\bullet}="E8";
"E7";"E8" **\dir{-};
(65,-3)*{-2};
(65,3)*{(0)};
{\ar (65,0)*{}; (75,10)*{}};
(75,12)*{(-1)};
"E4";"E1" **\dir{-};
"E4";"E3" **\dir{-};
"E3";"E2" **\dir{-};
\endxy
\end{equation*}
In general we have 
\begin{equation*}
\chi(\mathcal{F})= (2-1) \chi(V_f)=(2-1)(4a-3)=3-4a \ ,
\end{equation*}
and when
\begin{align*}
\qquad \qquad \qquad a &\equiv 1 \pmod{3} \ , & 3-4a &\equiv 11 \pmod{12} \ , \qquad \qquad \qquad \\
\qquad \qquad \qquad a &\equiv 2 \pmod{3} \ , & 3-4a &\equiv 7 \pmod{12} \ , \qquad \qquad \qquad \\
\qquad \qquad \qquad a &\equiv 0 \pmod{3} \ , & 3-4a &\equiv 3 \pmod{12} \ . \qquad \qquad \qquad
\end{align*}
Thus, the Milnor fibre $\mathcal{F}$ is not diffeomorphic to a smoothing of a normal Gorenstein complex surface singularity.
\end{proof}


\end{document}